\pgfplotsset{compat=1.15}
\theoremstyle{plain}
\newtheorem{Theorem}{Theorem}   [section]
\theoremstyle{definition}
\newtheorem{Definition}[Theorem] {Definition}
	\crefname{Construction}{Construction}{Constructions}
	\Crefname{Construction}{Construction}{Constructions}
\newcommand{\craig}[1]{\textcolor{orange}{#1}}
\newcommand{\setC}{\ensuremath{\mathcal{C}}}
\newcommand{\corange}[1]{\textcolor{orange}{#1}}
\newcommand{\ccyan}[1]{\textcolor{cyan}{#1}}
\newcommand{\lequiv}{\ensuremath{\sim}}
\begin{document}

\title{Mind the gap: A real-valued distance on combinatorial games}
\author{K. Burke$^1$, M. Fisher$^2$, C. Tennenhouse$^3$}
\address{$^1$Dept. of Comp. Sci., FL Southern College, Lakeland, FL USA 33801}
\address{$^2$Dept. of Mathematics, West Chester Univ., West Chester, PA, USA 19383}
\address{$^3$School of Math. \& Data Sci., Univ. of New England, Biddeford, ME, USA 04005}
\email{$^1$kburke@flsouthern.edu}
\email{$^2$mfisher@wcupa.edu}
\email{$^3$ctennenhouse@une.edu}

\maketitle

\begin{abstract}
We define a real-valued distance metric $wd$ on the space $\mathcal{C}$ of short combinatorial games in canonical form. We demonstrate the existence of Cauchy sequences informed by sidling sequences, find limit points, and investigate the closure $\overline{\mathcal{C}}$, which is shown to partition the set of loopy games in a non-trivial way. Stoppers, enders, and non-stopper-sided loopy games are explored, as well as the topological properties of $(\mathcal{C},wd)$.
\end{abstract}

\section{Introduction}\label{sec:intro}


A \emph{combinatorial game} is a two-player turn-based strategy game with perfect information and no chance elements. Under the \emph{normal play} convention the first player unable to move on their turn is the loser. Players are traditionally referred to as Left (with moves and pieces colored blue) and Right (with moves and pieces colored red). By convention we denote a game by $G = \{G^L|G^R\}$, where $G^L$ is the set of games Left can move to on their turn (called Left's \emph{options}), and $G^R$ the set of options available to Right. For a full introduction to Combinatorial Game Theory see \cite{WinningWays:2001} and \cite{LessonsInPlay:2007}. Throughout this manuscript we will use the notation $\{x|y||z|w\}$ (and its recursive analog) to denote $\{\{x|y\}|\{z|w\}\}$. We make use of a bespoke Python script for generating tikz diagrams to visualize game graphs, which is dependent on the excellent CGSuite software package \cite{SiegelCGSuite:2003}. In addition, many of our calculations could not have been completed without CGSuite, in which we both used default functions and wrote a number of scripts for this express purpose. 

\begin{Definition}
A \emph{follower} in a combinatorial game is a game reachable from that game during play. A combinatorial game is \emph{short} if no position can be repeated and there are only a finite number of followers.
\end{Definition}

A short combinatorial game is in \emph{canonical form} if it contains no dominated or reversible options. For example, the game $\{*,1|0\}$ simplifies to $\{1|0\}$ since $*$ is dominated by $1$ for Left. So $\{*,1|0\}$ is not in canonical form. For the definition of reversibility we paraphrase \cite{LessonsInPlay:2007}.

\begin{Definition}
A left option $A$ of $G$ is \emph{reversible} if $A$ has a right option $A^R$ such that $A^R \leq G$.
\end{Definition}

If $G = \{A, B, C, \ldots |H, I, J, \ldots\}$ and $A^R\leq G$ with $\{W, X, Y, \ldots\}$ the Left options of $A^R$, then we can rewrite $G$ as $\{W, X, Y, \ldots ,B, C, \ldots|H, I, J, \ldots\}$.

We let \setC\ denote the set of all short combinatorial games in canonical form.

A short game can be visualized as a rooted tree with edges colored blue and red. However, since every short game $G$ has a single source (the game itself) and a single sink (the game $0$) we can also represent any short game $G$ by a bicolored directed acyclic graph, or \emph{DAG}, which can be generated from a tree by identifying nodes that represent the same game. We will use $D(G)$ to represent the DAG of short game $G$, or simply $G$ where the context is clear. See Fig. \ref{fig:fractions} for an example.

As another example consider the game $G = \{*,1|0\}$ from above, which we saw is not in canonical form. Because $G$ is equivalent to the game $\{1|0\}\in \setC$, we say that $D(G)$ \emph{collapses} to $D(\{1|0\})$. If $G = \{x|y\}$ with $x<y$ both numbers then $G$ is commonly simplified to what is referred as \emph{the simplest number} between $x$ and $y$. That is, $\{x|y\}$ is the smallest dyadic rational (i.e. number of the form $\frac{a}{2^b}$ in lowest terms) between $x$ and $y$ among all such dyadic rationals with the smallest denominator. 


Not all combinatorial games are short. Some involve the possibility of infinite play.

\begin{Definition}
A game is \emph{loopy} if it has a follower that can be returned to at some point during play. 
\end{Definition}

If a game is loopy then it cannot be represented by a finite DAG, as a finite representative graph requires cycles. There is also no currently agreed upon convention for canonical forms of loopy games (though there has been work on expressing a loopy game in its \emph{simplest form} \cite{ConwayLoopy:1978,NewResultsMSRI:Siegel:2009}). However, if we allow for infinite graphs then we can realize loopy games without the need for cycles. Consider $\textsc{on} = \{\textsc{on}|\}$, the game where Left can move back to $\textsc{on}$ and Right has no options. Traditionally \textsc{on} is visualized as a single vertex with a single blue loop, but we can also draw it as an infinite directed path of blue edges. 

Note that not all games that are not short are loopy. We also have loop-free \emph{infinite games}, games with an infinite number of followers. Consider, for example, the game $\omega$, realized in game notation as $\{0,1,2,\ldots |\}$. Left has infinitely many options but there is no repetition possible, and the game will always end in a finite number of moves.

A loopy game in which all cycles have length $1$ is called a \emph{plum tree}, of which \textsc{on} is a named example, along with $\textsc{dud}=\{\textsc{dud}|\textsc{dud}\}$ (which stands for \emph{deathless universal draw}), $\textsc{over} = \{0|\textsc{over}\}, \textsc{upon} = \{\textsc{upon}|*\}$, and many others. Of note is that the sum of $\textsc{on}$ and $\textsc{off}$ (another name for $-\textsc{on}$) is not $0$ but \textsc{dud}, since neither player has a move other than to return to the starting position. 

Continuing our definitions around non-short games, it's important to differentiate between games that \emph{can} stop and those that \emph{will} stop. We now define two new terms simultaneously.

\begin{Definition}
A game that, when played in isolation in alternating play, cannot continue forever without stopping is a \emph{stopper}. A game in which it is impossible to have an infinite series of moves, even under non-alternating play, is an \emph{ender}. 
\end{Definition}

So \textsc{on} and \textsc{over} are stoppers since alternating play results in play stopping after at most a couple moves, but neither is an ender since one player could play continuously on the same game. However, \textsc{dud} is neither a stopper nor an ender. 

Games that are not short are often analyzed by means of their \emph{sides}, which are game values that assist in the analysis of non-stoppers when the one or both players are considered to win (or lose) if the game continues forever. Some games with this property include \textsc{Bagh Chal}, \textsc{Fox \& Geese}, and \textsc{Hare \& Hounds}. We will not address the theory of sides in this manuscript except as a motivation for \emph{sidling}. 

\begin{Definition}
\emph{Sidling} is the process of generating a sequence of game values (a \emph{sidling sequence}), possibly infinite, that approaches the sides of a loopy game $\gamma$ by beginning with a value known to be greater than (less than) $\gamma$ and successively generating a smaller (larger) term in the sequence.
\end{Definition}


We demonstrate by reproducing an example from page 291 in \cite{SiegelCGT:2013}. Consider the game $\textsc{over}$ as defined above. We begin with the fact that $\textsc{over}\leq \textsc{on}$, and since  $\textsc{over} = \{0|\textsc{over}\}$ we have
\[
\hspace*{-3in}
\begin{aligned}
\textsc{over} &\leq  \{0|\textsc{on}\} &= 1 \\
\textsc{over} &\leq  \{0|1\} &= \frac{1}{2} \\
\textsc{over} &\leq  \{0|\frac{1}{2}\} &= \frac{1}{4} \\
\textsc{over} &\leq  \{0|\frac{1}{4}\} &= \frac{1}{8} \\
\vdots && \\
\end{aligned}
\]
This yields the sequence $(\frac{1}{2^n})$. Each term of this sequence remains greater than $\textsc{over}$. Similarly, we can sidle from below (called the \emph{\textsc{off}-side}). Note that $\textsc{over} \geq \textsc{off}$, but it is also greater than $0$, which is a simpler value to start with.
\[
\hspace*{-3in}
\begin{aligned}
\textsc{over} &\geq  \{0|0\} &=& * \\
\textsc{over} &\geq  \{0|*\} &=& \uparrow \\
\textsc{over} &\geq  \{0|\uparrow\} &=& \Uparrow\!* \\
\textsc{over} &\geq  \{0|\Uparrow\!*\} &=& \uparrow\uparrow\uparrow \\
\vdots && \\
\end{aligned}
\]

which yields the sidling sequence $(*+(\uparrow\!*)\times n)$. Figures  \ref{fig:fractions} and \ref{fig:ups} show a few terms of each of these sequences. 

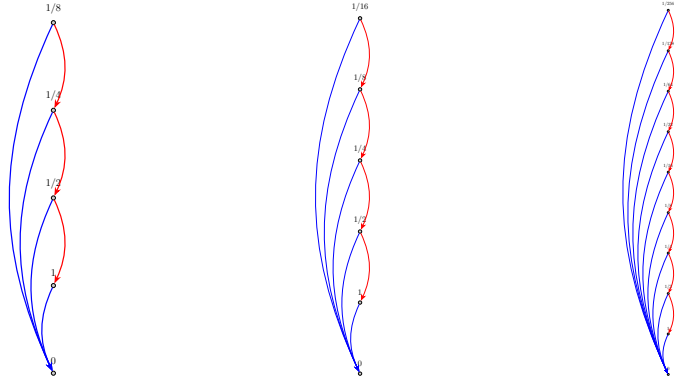
\begin{figure}[ht]
\centering
\begin{minipage}{0.32\textwidth}
  \centering
  \resizebox{!}{5cm}{\begin{tikzpicture}[>=Stealth, auto, node distance=2cm, line width=1.2pt]
\tikzset{
  L_edge/.style={->, very thick, blue},
  R_edge/.style={->, very thick, red},
  loop_large/.style={loop, in=135, out=45, looseness=20, distance=50pt}
}
\node[circle, inner sep=1pt, minimum size=4pt, draw=black, line width=0.5pt, fill=gray!40, label={[font=\bfseries, label distance=0.1cm]above:$1/8$}] (N1) at (0.0, 12.0) {};
\node[circle, inner sep=1pt, minimum size=4pt, draw=black, line width=0.5pt, fill=gray!40, label={[font=\bfseries, label distance=0.1cm]above:$0$}] (N2) at (0.0, 0.0) {};
\node[circle, inner sep=1pt, minimum size=4pt, draw=black, line width=0.5pt, fill=gray!40, label={[font=\bfseries, label distance=0.1cm]above:$1/4$}] (N3) at (0.0, 9.0) {};
\node[circle, inner sep=1pt, minimum size=4pt, draw=black, line width=0.5pt, fill=gray!40, label={[font=\bfseries, label distance=0.1cm]above:$1/2$}] (N4) at (0.0, 6.0) {};
\node[circle, inner sep=1pt, minimum size=4pt, draw=black, line width=0.5pt, fill=gray!40, label={[font=\bfseries, label distance=0.1cm]above:$1$}] (N5) at (0.0, 3.0) {};
\draw[L_edge] (N1) to[bend right=25] (N2);
\draw[R_edge] (N1) to[bend left=25] (N3);
\draw[L_edge] (N3) to[bend right=25] (N2);
\draw[R_edge] (N3) to[bend left=25] (N4);
\draw[L_edge] (N4) to[bend right=25] (N2);
\draw[R_edge] (N4) to[bend left=25] (N5);
\draw[L_edge] (N5) to[bend right=25] (N2);
\end{tikzpicture}}
\end{minipage}%
\begin{minipage}{0.32\textwidth}
  \centering
  \resizebox{!}{5cm}{\begin{tikzpicture}[>=Stealth, auto, node distance=2cm, line width=1.2pt]
\tikzset{
  L_edge/.style={->, very thick, blue},
  R_edge/.style={->, very thick, red},
  loop_large/.style={loop, in=135, out=45, looseness=20, distance=50pt}
}
\node[circle, inner sep=1pt, minimum size=4pt, draw=black, line width=0.5pt, fill=gray!40, label={[font=\bfseries, label distance=0.1cm]above:$1/16$}] (N1) at (0.0, 15.0) {};
\node[circle, inner sep=1pt, minimum size=4pt, draw=black, line width=0.5pt, fill=gray!40, label={[font=\bfseries, label distance=0.1cm]above:$0$}] (N2) at (0.0, 0.0) {};
\node[circle, inner sep=1pt, minimum size=4pt, draw=black, line width=0.5pt, fill=gray!40, label={[font=\bfseries, label distance=0.1cm]above:$1/8$}] (N3) at (0.0, 12.0) {};
\node[circle, inner sep=1pt, minimum size=4pt, draw=black, line width=0.5pt, fill=gray!40, label={[font=\bfseries, label distance=0.1cm]above:$1/4$}] (N4) at (0.0, 9.0) {};
\node[circle, inner sep=1pt, minimum size=4pt, draw=black, line width=0.5pt, fill=gray!40, label={[font=\bfseries, label distance=0.1cm]above:$1/2$}] (N5) at (0.0, 6.0) {};
\node[circle, inner sep=1pt, minimum size=4pt, draw=black, line width=0.5pt, fill=gray!40, label={[font=\bfseries, label distance=0.1cm]above:$1$}] (N6) at (0.0, 3.0) {};
\draw[L_edge] (N1) to[bend right=25] (N2);
\draw[R_edge] (N1) to[bend left=25] (N3);
\draw[L_edge] (N3) to[bend right=25] (N2);
\draw[R_edge] (N3) to[bend left=25] (N4);
\draw[L_edge] (N4) to[bend right=25] (N2);
\draw[R_edge] (N4) to[bend left=25] (N5);
\draw[L_edge] (N5) to[bend right=25] (N2);
\draw[R_edge] (N5) to[bend left=25] (N6);
\draw[L_edge] (N6) to[bend right=25] (N2);
\end{tikzpicture}}
\end{minipage}%
\begin{minipage}{0.32\textwidth}
  \centering
  \resizebox{!}{5cm}{\begin{tikzpicture}[>=Stealth, auto, node distance=2cm, line width=1.2pt]
\tikzset{
  L_edge/.style={->, very thick, blue},
  R_edge/.style={->, very thick, red},
  loop_large/.style={loop, in=135, out=45, looseness=20, distance=50pt}
}
\node[circle, inner sep=1pt, minimum size=4pt, draw=black, line width=0.5pt, fill=gray!40, label={[font=\bfseries, label distance=0.1cm]above:$1/256$}] (N1) at (0.0, 27.0) {};
\node[circle, inner sep=1pt, minimum size=4pt, draw=black, line width=0.5pt, fill=gray!40, label={[font=\bfseries, label distance=0.1cm]above:$0$}] (N2) at (0.0, 0.0) {};
\node[circle, inner sep=1pt, minimum size=4pt, draw=black, line width=0.5pt, fill=gray!40, label={[font=\bfseries, label distance=0.1cm]above:$1/128$}] (N3) at (0.0, 24.0) {};
\node[circle, inner sep=1pt, minimum size=4pt, draw=black, line width=0.5pt, fill=gray!40, label={[font=\bfseries, label distance=0.1cm]above:$1/64$}] (N4) at (0.0, 21.0) {};
\node[circle, inner sep=1pt, minimum size=4pt, draw=black, line width=0.5pt, fill=gray!40, label={[font=\bfseries, label distance=0.1cm]above:$1/32$}] (N5) at (0.0, 18.0) {};
\node[circle, inner sep=1pt, minimum size=4pt, draw=black, line width=0.5pt, fill=gray!40, label={[font=\bfseries, label distance=0.1cm]above:$1/16$}] (N6) at (0.0, 15.0) {};
\node[circle, inner sep=1pt, minimum size=4pt, draw=black, line width=0.5pt, fill=gray!40, label={[font=\bfseries, label distance=0.1cm]above:$1/8$}] (N7) at (0.0, 12.0) {};
\node[circle, inner sep=1pt, minimum size=4pt, draw=black, line width=0.5pt, fill=gray!40, label={[font=\bfseries, label distance=0.1cm]above:$1/4$}] (N8) at (0.0, 9.0) {};
\node[circle, inner sep=1pt, minimum size=4pt, draw=black, line width=0.5pt, fill=gray!40, label={[font=\bfseries, label distance=0.1cm]above:$1/2$}] (N9) at (0.0, 6.0) {};
\node[circle, inner sep=1pt, minimum size=4pt, draw=black, line width=0.5pt, fill=gray!40, label={[font=\bfseries, label distance=0.1cm]above:$1$}] (N10) at (0.0, 3.0) {};
\draw[L_edge] (N1) to[bend right=25] (N2);
\draw[R_edge] (N1) to[bend left=25] (N3);
\draw[L_edge] (N3) to[bend right=25] (N2);
\draw[R_edge] (N3) to[bend left=25] (N4);
\draw[L_edge] (N4) to[bend right=25] (N2);
\draw[R_edge] (N4) to[bend left=25] (N5);
\draw[L_edge] (N5) to[bend right=25] (N2);
\draw[R_edge] (N5) to[bend left=25] (N6);
\draw[L_edge] (N6) to[bend right=25] (N2);
\draw[R_edge] (N6) to[bend left=25] (N7);
\draw[L_edge] (N7) to[bend right=25] (N2);
\draw[R_edge] (N7) to[bend left=25] (N8);
\draw[L_edge] (N8) to[bend right=25] (N2);
\draw[R_edge] (N8) to[bend left=25] (N9);
\draw[L_edge] (N9) to[bend right=25] (N2);
\draw[R_edge] (N9) to[bend left=25] (N10);
\draw[L_edge] (N10) to[bend right=25] (N2);
\end{tikzpicture}}
\end{minipage}%
\caption{Digraph representations of the games $\frac{1}{8}$, $\frac{1}{16}$, and $\frac{1}{256}$}
\label{fig:fractions}
\end{figure}

\begin{figure}[ht]
\centering
\begin{minipage}{0.32\textwidth}
  \centering
  \resizebox{!}{5cm}{
\begin{tikzpicture}[>=Stealth, auto, node distance=2cm, line width=1.2pt]
\tikzset{
  L_edge/.style={->, very thick, blue},
  R_edge/.style={->, very thick, red},
  loop_large/.style={loop, in=135, out=45, looseness=20, distance=50pt}
}
\node[circle, inner sep=1pt, minimum size=4pt, draw=black, line width=0.5pt, fill=white] (N1) at (0.0, 9.0) {};
\node[circle, inner sep=1pt, minimum size=4pt, draw=black, line width=0.5pt, fill=white] (N2) at (0.0, 0.0) {};
\node[circle, inner sep=1pt, minimum size=4pt, draw=black, line width=0.5pt, fill=white] (N3) at (0.0, 6.0) {};
\node[circle, inner sep=1pt, minimum size=4pt, draw=black, line width=0.5pt, fill=white] (N4) at (0.0, 3.0) {};
\draw[L_edge] (N1) to[bend right=25] (N2);
\draw[R_edge] (N1) to[bend left=25] (N3);
\draw[L_edge] (N3) to[bend right=25] (N2);
\draw[R_edge] (N3) to[bend left=25] (N4);
\draw[L_edge] (N4) to[bend right=25] (N2);
\draw[R_edge] (N4) to[bend left=25] (N2);
\end{tikzpicture}}
\end{minipage}%
\begin{minipage}{0.32\textwidth}
  \centering
  \resizebox{!}{5cm}{
\begin{tikzpicture}[>=Stealth, auto, node distance=2cm, line width=1.2pt]
\tikzset{
  L_edge/.style={->, very thick, blue},
  R_edge/.style={->, very thick, red},
  loop_large/.style={loop, in=135, out=45, looseness=20, distance=50pt}
}
\node[circle, inner sep=1pt, minimum size=4pt, draw=black, line width=0.5pt, fill=white] (N1) at (0.0, 15.0) {};
\node[circle, inner sep=1pt, minimum size=4pt, draw=black, line width=0.5pt, fill=white] (N2) at (0.0, 0.0) {};
\node[circle, inner sep=1pt, minimum size=4pt, draw=black, line width=0.5pt, fill=white] (N3) at (0.0, 12.0) {};
\node[circle, inner sep=1pt, minimum size=4pt, draw=black, line width=0.5pt, fill=white] (N4) at (0.0, 9.0) {};
\node[circle, inner sep=1pt, minimum size=4pt, draw=black, line width=0.5pt, fill=white] (N5) at (0.0, 6.0) {};
\node[circle, inner sep=1pt, minimum size=4pt, draw=black, line width=0.5pt, fill=white] (N6) at (0.0, 3.0) {};
\draw[L_edge] (N1) to[bend right=25] (N2);
\draw[R_edge] (N1) to[bend left=25] (N3);
\draw[L_edge] (N3) to[bend right=25] (N2);
\draw[R_edge] (N3) to[bend left=25] (N4);
\draw[L_edge] (N4) to[bend right=25] (N2);
\draw[R_edge] (N4) to[bend left=25] (N5);
\draw[L_edge] (N5) to[bend right=25] (N2);
\draw[R_edge] (N5) to[bend left=25] (N6);
\draw[L_edge] (N6) to[bend right=25] (N2);
\draw[R_edge] (N6) to[bend left=25] (N2);
\end{tikzpicture}}
\end{minipage}%
\begin{minipage}{0.32\textwidth}
  \centering
  \resizebox{!}{5cm}{
\begin{tikzpicture}[>=Stealth, auto, node distance=2cm, line width=1.2pt]
\tikzset{
  L_edge/.style={->, very thick, blue},
  R_edge/.style={->, very thick, red},
  loop_large/.style={loop, in=135, out=45, looseness=20, distance=50pt}
}
\node[circle, inner sep=1pt, minimum size=4pt, draw=black, line width=0.5pt, fill=white] (N1) at (0.0, 18.0) {};
\node[circle, inner sep=1pt, minimum size=4pt, draw=black, line width=0.5pt, fill=white] (N2) at (0.0, 0.0) {};
\node[circle, inner sep=1pt, minimum size=4pt, draw=black, line width=0.5pt, fill=white] (N3) at (0.0, 15.0) {};
\node[circle, inner sep=1pt, minimum size=4pt, draw=black, line width=0.5pt, fill=white] (N4) at (0.0, 12.0) {};
\node[circle, inner sep=1pt, minimum size=4pt, draw=black, line width=0.5pt, fill=white] (N5) at (0.0, 9.0) {};
\node[circle, inner sep=1pt, minimum size=4pt, draw=black, line width=0.5pt, fill=white] (N6) at (0.0, 6.0) {};
\node[circle, inner sep=1pt, minimum size=4pt, draw=black, line width=0.5pt, fill=white] (N7) at (0.0, 3.0) {};
\draw[L_edge] (N1) to[bend right=25] (N2);
\draw[R_edge] (N1) to[bend left=25] (N3);
\draw[L_edge] (N3) to[bend right=25] (N2);
\draw[R_edge] (N3) to[bend left=25] (N4);
\draw[L_edge] (N4) to[bend right=25] (N2);
\draw[R_edge] (N4) to[bend left=25] (N5);
\draw[L_edge] (N5) to[bend right=25] (N2);
\draw[R_edge] (N5) to[bend left=25] (N6);
\draw[L_edge] (N6) to[bend right=25] (N2);
\draw[R_edge] (N6) to[bend left=25] (N7);
\draw[L_edge] (N7) to[bend right=25] (N2);
\draw[R_edge] (N7) to[bend left=25] (N2);
\end{tikzpicture}}
\end{minipage}%
\caption{Digraph representations of $\Uparrow\!*$, $\uparrow\!4*$, and $\uparrow\!5$}
\label{fig:ups}
\end{figure}
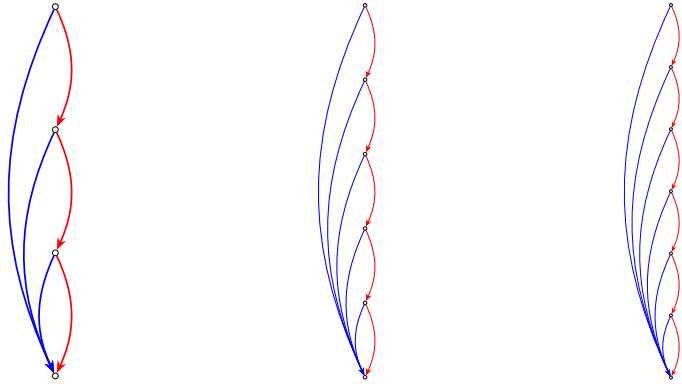


Sidling does not always yield an infinite sequence. Sometimes it reaches a side after a finite number of steps, and sometimes it does not yield a useful sequence at all. 

The astute reader will notice that terms of one of our example sidling sequences are nested within the other as subgraphs, and in fact terms of one sequence are very similar topologically to terms of the other. In Section \ref{sec:dwd} we formalize these notions. Our primary goal in this project is the analytic study of the space \setC\ of short games in canonical form, yielding insight into their structure and relationships to each other, and to the loopy games that they approximate. One purpose of this endeavor is to better understand and express what it means for a pair or larger collection of games to be \emph{close}, hopefully allowing for future examination of games as subjects of optimization. For that reason we need to discuss \emph{metric spaces}.

\section{Sequences in metric spaces}\label{sec:sequences}

In this section we define some relevant terms from Analysis that will be useful in later sections.

\begin{Definition}
A \emph{distance metric} $d$ on a space $W$ is a real-valued function on $W\times W$ with the following properties.

For any $a,b,c\in W$
\begin{enumerate}
	\item $d(a,a) = 0$ 
	\item If $a\neq b$ then $d(a,b)>0$
	\item $d(a,b) = d(b,a)$
	\item $d(a,b) \leq d(a,c) + d(c,b)$.
\end{enumerate}

The pair $(W,d)$ is a \emph{metric space}.
\end{Definition}

\begin{Definition}
A sequence $(a_n)$ in a space $W$ is \emph{Cauchy} under a distance metric $d$ if, for any $\epsilon > 0$, there is some $N\in \mathbb{Z}$ such that $\forall n,m>N$, $d(a_n,a_m)<\epsilon$.
\end{Definition}

Next, in order to define the closure of a space, we must first define limit points.

\begin{Definition} 
A \emph{limit point} (or \emph{accumulation point}) $x$ of a metric space $(W,d)$ is an object such that for any $\epsilon>0$, there is at least one element $w\in W\setminus\{x\}$ with $d(w,x)<\epsilon$. 
\end{Definition}

If $d$ is a metric that extends to all limit points of $W$ then we can define the \emph{closure} of $(W,d)$ to be $W$ along with all of its limit points, denoted $\overline{W}$. For example, the closure of $\mathbb{Q}$ under the euclidean metric is $\mathbb{R}$, and since the euclidean metric is defined on the irrationals we can use the same metric on $\mathbb{R}$. 

If $d$ is not defined on the limit points of $W$ then we must define the \emph{completion} of $(W,d)$. Two distinct Cauchy sequences $(a_n), (b_n)$ can get arbitrarily close to each other. Sequences like this, where $d(a_n,b_n) \rightarrow 0$, are called \emph{equivalent sequences}. Equivalence of Cauchy sequences defines an equivalence relation on the space of all Cauchy sequences in $(W,d)$. Note that any element $w\in W$ elicits a trivial equivalence class $[(w)]$ by means of the constant sequence $(w,w,w,\ldots)$.

Consider the set of equivalence classes $\widetilde{W}$ under this equivalence relation. The \emph{completion} of $W$ under $d$ is defined to be $\widetilde{W}$, with distance measured by the asymptotic distance between representative elements of Cauchy sequences. As an example, the rationals $\mathbb{Q}$ can be completed to $\mathbb{Q}_p$ under the $p$-adic metric for some prime $p$.

We wish to define a distance metric on \setC\ that reflects in some way what real play reveals about similarity among games. The standard game value difference on \setC, $G-H$, is defined to be the game $X$ where $H+X=G$. Instead, we wish to define a real-valued distance.




\section{Diminishing weight distance}\label{sec:dwd}

We are now ready to define our distance metric. 
\begin{Definition}
Let $G,H \in \setC$. An \emph{edit sequence} from $G$ to $H$ is a sequence of colored edge removals and/or additions that turns $D(G)$ into a colored DAG isomorphic to $D(H)$, without changing which vertex is the source. If at any point a vertex becomes isolated then it is removed from the digraph. 

The \emph{cost} of an edit is $\left(\frac{1}{2}\right)^d$, where $d$ is the distance from the source node of the digraph to the initial node of the edge added or removed. The \emph{diminishing weight distance} from $G$ to $H$, denoted $wd(G,H)$, is the minimum total cost over all edit sequences.
\end{Definition}

\begin{Theorem}
Diminished weight distance is a distance metric on \setC.
\end{Theorem}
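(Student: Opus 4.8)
The plan is to verify the four metric axioms in turn, with most of the work going into non-degeneracy (axiom 2) and the triangle inequality (axiom 4), since symmetry and $wd(G,G)=0$ are essentially immediate. For $wd(G,G)=0$, note the empty edit sequence has cost $0$. For symmetry, observe that edit sequences are reversible: an edit sequence from $G$ to $H$ can be read backwards to give an edit sequence from $H$ to $G$ of the same total cost, since each edge addition becomes an edge removal (and vice versa) at the same depth from the source. One subtlety to address here: the depth of the initial node of an edited edge must be tracked carefully, because intermediate DAGs in an edit sequence need not be the DAG of any game, and a vertex's distance from the source can change as edges are added or removed. I would define the cost of each individual edit relative to the distance-from-source \emph{in the current intermediate DAG at the moment the edit is performed}, and check that reversing the sequence reuses exactly the same sequence of intermediate DAGs (traversed in reverse), hence the same per-edit costs.

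For the triangle inequality $wd(G,H)\le wd(G,K)+wd(K,H)$, the key step is concatenation: given an edit sequence $S_1$ from $G$ to $K$ and an edit sequence $S_2$ from $K$ to $H$, their concatenation is an edit sequence from $G$ to $H$ (using that ``isomorphic DAG, same source'' is transitive, so we can identify the endpoint of $S_1$ with the start of $S_2$ up to isomorphism). The total cost is additive over concatenation, so the cost of the concatenated sequence is $\mathrm{cost}(S_1)+\mathrm{cost}(S_2)$; taking infima over $S_1$ and $S_2$ separately gives the inequality. The one technical point is that ``minimum'' in the definition should be justified as actually attained (or replaced by ``infimum''); I would either argue that an optimal edit sequence need never revisit a DAG and that bounded-cost edit sequences form a well-founded search space, or simply phrase everything with infima, which suffices for the axioms.

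The main obstacle is axiom 2: if $G\not\cong H$ as colored DAGs (equivalently, since both are in canonical form, $G\ne H$ as games), then $wd(G,H)>0$. The difficulty is that there could in principle be edit sequences of arbitrarily small positive total cost; we must bound $wd(G,H)$ below by a positive constant depending on $G,H$. The idea is a depth-localization argument: let $\delta$ be the smallest depth at which $D(G)$ and $D(H)$ differ — more precisely, the least $d$ such that the sub-DAGs reachable within distance $d$ of the source differ, or such that the edge-sets at depth $d$ differ. Any edit sequence turning $D(G)$ into $D(H)$ must at some point perform an edit of an edge whose initial node sits at depth $\le \delta$ in the current DAG, because no amount of editing strictly below depth $\delta$ can change the portion of the graph at depth $\le\delta$; such an edit costs at least $(1/2)^{\delta}$. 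Making this rigorous requires care about how editing at greater depth can shift the depths of shallow vertices — I would argue that the depth of the source's immediate neighborhood is monotone-controlled, or induct on $\delta$, showing that to alter anything at depth $\delta$ one must first pay for an edit at depth $\le\delta$. This localization lemma is where I expect to spend the bulk of the proof; once it is in hand, $wd(G,H)\ge (1/2)^{\delta}>0$ and the theorem follows.
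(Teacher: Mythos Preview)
Your approach for axioms (1), (3), and (4) is exactly the paper's: the empty sequence gives $wd(G,G)=0$, reversal of an optimal edit sequence gives symmetry, and concatenation of edit sequences gives the triangle inequality. The paper dispatches each of these in one or two sentences and does not engage with the technical points you raise about tracking depth through intermediate DAGs or about whether the minimum is attained.

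The substantive difference is axiom (2). The paper's full argument reads: ``If $G\neq H$ then every edit sequence from $G$ to $H$ requires at least one edit, and edit cost is positive. Hence $wd(G,H)>0$.'' As you observe, this shows each individual edit sequence has positive cost but does not on its own establish that the infimum over all sequences is positive; the paper is leaning on the word ``minimum'' in the definition without justifying attainment. Your depth-localization plan---identifying the least depth $\delta$ at which $D(G)$ and $D(H)$ differ and arguing that any edit sequence must contain an edit costing at least $(1/2)^\delta$---is the honest way to close this gap, and a lower bound of just this shape is in fact asserted (again without proof) later in the paper when the authors claim that no two elements of $\setC$ can have distance less than $(1/2)^{d-1}$ in order to show the space is discrete. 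So you are being more careful than the paper here; the extra work buys you a quantitative lower bound the paper eventually needs anyway.
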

\begin{proof}
We must show that $(\setC,wd)$ has the following properties.
\begin{enumerate}
	\item $wd(G,G) = 0$ for all $G\in \setC$
	\item If $G\neq H$ then $wd(G,H)>0$
	\item For all $G,H\in \setC$, $wd(G,H) = wd(H,G)$
	\item For all $G,H,J\in \setC$, $wd(G,H) \leq wd(G,J) + wd(J,H)$.
\end{enumerate}

We prove these in turn.
\begin{enumerate}
	\item If $G\in \setC$ then the lowest total cost edit sequence from $D(G)$ to $D(G)$ is the identity sequence with no edits. Thus $wd(G,G)=0$.
	\item If $G\neq H$ then every edit sequence from $G$ to $H$ requires at least one edit, and edit cost is positive. Hence $wd(G,H)>0$.
	\item Let $S$ be a minimum cost sequence of edits to turn $D(G)$ into $D(H)$. Reversing $S$ and swapping additions and deletions is a minimum cost edit sequence to turn $D(H)$ into $D(G)$. If there was a sequence $S'$ to get from $D(H)$ to $D(G)$ with lower cost then $S$, then its reversal would also turn $D(G)$ to $D(H)$ and have lower cost than $S$.
	\item We finish with the triangle inequality. Consider $G,H,J\in \setC$ and let $S_1,S_2$ be edit sequences that take $D(G)$ to $D(J)$ and $D(J)$ to $D(H)$, respectively. Since each sequence fixes the source vertex of each digraph, the sequence $S_1+S_2$ (the first sequence followed by the second) has total cost equal to the sum of the costs of the two sequences. Thus $S_1+S_2$ is an edit sequence from $D(G)$ to $D(H)$ which cannot have lower cost than $wd(G,H)$. And so $wd(G,H) \leq wd(G,J) + wd(J,H)$.	
\end{enumerate}

\end{proof}


Let's extend the definition of $wd$ to loopy games. Note that the function $D$ is only defined on short games. Let $G$ be a loopy game and $H$ an infinite DAG. The ruleset \textsc{tree} is played on a bicolored digraph with a token on a vertex of the graph (usually beginning at the source node if only one exists). A player's turn consists of moving the token along a directed edge of their color. We say that $G$ is \emph{equivalent to} $H$ ($G\lequiv \textsc{tree}(H)$, abbreviated to $G\lequiv H$) if, for every game $J$, $J+G = J+\textsc{tree}(H)$, using the conventional definition of game addition. If two infinite DAGs, $H_1,H_2$, are both equivalent to the same game $G$ then we say $H_1\lequiv H_2$.  

We first extend $wd$ to include \textsc{tree} played on infinite DAGs. Say $H$ is an infinite DAG and $J\in \setC$. Let $wd(\textsc{tree}(H),J)$ be the minimum cost of an edit sequence taking $H$ to $D(J)$. If instead $J$ is another infinite DAG then let $wd(\textsc{tree}(H),\textsc{tree}(J))$ (which we can abbreviate to $wd(H,J)$) be the minimum cost of an edit sequence taking $H$ to $J$.

Now we can extend the metric $wd$ to loopy games. If $G$ is a loopy game and $J\in \setC$, then $wd(G,J) = \inf_{H_i\lequiv G}\{wd(\textsc{tree}(H_i),J)\}$. That is, $wd(G,J)$ is the infimum over the costs of all edit sequences from $H_i$ to $D(J)$, where $\{H_i\}$ ranges over all infinite DAGs equivalent to $G$. If $J$ is also loopy then $wd(G,J)$ is the infimum ranging over all DAGs equivalent to $G$ and over all those equivalent to $J$. By nature of the geometric series that comprise the resulting infinite edit sequence costs, we can have finite distance between games realized by infinite DAGs.

For example, consider the two sidling sequences for the game $\textsc{over}=\{0|\textsc{over}\}$, $(a_n) = \left(\frac{1}{2^n}\right)$ and $(b_n) = (*+(\uparrow\!*)\times n)$. First we demonstrate that these sequences are both Cauchy. 

\begin{Theorem}
The sequences $(a_n) = \left(\frac{1}{2^n}\right)$ and $(b_n) = (*+(\uparrow\!*)\times n)$ are Cauchy under $wd$. 
\end{Theorem}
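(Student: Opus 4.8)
The plan is to bound the distance between consecutive terms of each sequence by a quantity that decays geometrically and then telescope using the triangle inequality established in the previous theorem.

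I would begin by recording the DAG structure of the terms, read off from Figures~\ref{fig:fractions} and~\ref{fig:ups}. The graph $D(a_n)$ is a red directed path (a ``spine'') $a_n\to a_{n-1}\to\cdots\to a_1=1$ together with a blue edge from each spine vertex to the sink $0$; the terminal vertex $1=\{0\mid\,\}$ has no red edge, and $a_k=1/2^k$ sits at distance $n-k$ from the source. Similarly $D(b_n)$ is a red spine $b_n\to b_{n-1}\to\cdots\to b_0=*\to 0$ together with a blue edge from each of $b_n,\dots,b_0$ to $0$, where $b_0=*=\{0\mid 0\}$ is responsible for the final red edge (which lands on $0$) and sits at distance $n$ from the source. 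The key structural point is that $D(a_n)$ and $D(a_{n+1})$ are \emph{identical near the source}: the source has a blue edge to $0$ and a red edge whose target again has a blue edge to $0$ and a red edge, and so on, the two graphs only diverging at the very bottom of the spine; the same is true of $D(b_n)$ and $D(b_{n+1})$.

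Next I would exhibit an inexpensive edit sequence turning $D(a_n)$ into $D(a_{n+1})$. The point is to build the extra spine vertex at the \emph{bottom}, where edits are cheap, rather than splicing one in near the root: at the terminal vertex $1$, which lies at distance $n-1$ from the source, add a red edge to a fresh vertex $u$ (cost $(1/2)^{n-1}$), then add a blue edge from $u$ to $0$ (cost $(1/2)^{n}$). The result is isomorphic to $D(a_{n+1})$, so $wd(a_n,a_{n+1})\le (1/2)^{n-1}+(1/2)^{n}=3\cdot 2^{-n}$. An analogous move handles $(b_n)$: at the vertex $*=b_0$, which lies at distance $n$ from the source, delete its red edge to $0$ (cost $2^{-n}$), add a red edge to a fresh vertex $u$ (cost $2^{-n}$), and give $u$ a blue and a red edge to $0$ (cost $2\cdot 2^{-(n+1)}=2^{-n}$); this yields $D(b_{n+1})$, so $wd(b_n,b_{n+1})\le 3\cdot 2^{-n}$. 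At each step one checks that the graph remains a DAG with unchanged source and that no required vertex becomes isolated, which is clear since every new edge points ``downward'' toward higher distance and $0$ always retains incoming blue edges.

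Finally, for $m>n$ the triangle inequality gives $wd(a_n,a_m)\le\sum_{j=n}^{m-1}wd(a_j,a_{j+1})\le\sum_{j=n}^{\infty}3\cdot 2^{-j}=3\cdot 2^{-(n-1)}$, and likewise $wd(b_n,b_m)\le 3\cdot 2^{-(n-1)}$; by symmetry of $wd$ the same bounds hold when $n>m$. Hence, given $\epsilon>0$, any $N$ with $3\cdot 2^{-(N-1)}<\epsilon$ witnesses that $wd(a_n,a_m)<\epsilon$ and $wd(b_n,b_m)<\epsilon$ for all $n,m>N$, so both sequences are Cauchy. I expect the only genuine subtlety to be the observation that drives the whole argument: the edit must append the new spine vertex at the bottom of the tree, since an edit performed at or near the root costs a constant and would destroy the estimate. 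Once that is recognized, everything else is routine --- the verification of the DAG pictures (with mild care about the differing behavior of the terminal vertices $1$ and $*$) and a standard geometric-series bound.
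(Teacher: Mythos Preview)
Your proof is correct and follows essentially the same approach as the paper: both exhibit edit sequences that extend the red spine at its deepest point and bound the resulting cost by a geometric tail, with the paper estimating $wd(a_n,a_m)$ directly while you bound consecutive terms and then telescope via the triangle inequality. There is a harmless off-by-one in your description of $D(a_n)$ --- the terminal spine vertex $1$ actually sits at distance $n$ from the source (the spine is $a_n\to a_{n-1}\to\cdots\to a_1=\tfrac12\to 1$, so $a_1=\tfrac12$, not $1$), which means your consecutive-term cost is really $3\cdot 2^{-(n+1)}$ rather than $3\cdot 2^{-n}$; but this only tightens your bound and changes nothing in the argument.
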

\begin{proof}
For illustration see Figs. \ref{fig:fractions} and \ref{fig:ups}. Fix $N\in \mathbb{N}$ and let $n>m>N$. We can embed $D(\frac{1}{2^m})$ in $D(\frac{1}{2^n})$ by identifying the roots and by matching up the node labeled $1$ in $D(\frac{1}{2^m})$ with the node labeled $\frac{1}{2^m}$ in $D(\frac{1}{2^n})$. Then we can see that 
$$wd\left(\frac{1}{2^n},\frac{1}{2^m}\right) \leq \sum_{i=m}^{n-1} \left(\frac{1}{2^i} + \frac{1}{2^{i+1}}\right) = 3\cdot 2^{-m} - 3\cdot 2^{-n}$$ 
which goes to $0$ as $N\rightarrow \infty$.

An almost identical argument works for $(b_n)$. In this case 
$$wd(b_n,b_m) \leq \underbrace{\frac{1}{2^m}}_\text{rem red to $0$} + \sum_{i=m}^{n-1} \left(\frac{1}{2^i} + \frac{1}{2^{i+1}}\right) + \underbrace{\frac{1}{2^{n}}}_\text{add red to $0$} = 2^{2-m} - 2^{1-n},$$
which again diminishes to $0$ as $N$ increases to infinity. 
\end{proof}

It is obvious that $d(a_n,b_n) = (\frac{1}{2})^{n+1} \rightarrow 0$, hence the sidling sequences are equivalent. We can realize \textsc{over} as an infinite DAG consisting of an infinite red directed path and a blue edge from each node to $0$. 

Thus 
$$wd(a_n,\textsc{over}) \leq \sum_{i=n}^\infty \left(\frac{1}{2^i} + \frac{1}{2^{i+1}}\right) = 3\cdot 2^{-n}$$
which approaches $0$ as $n$ increases. Similarly $wd(b_n,\textsc{over})$ approaches $0$. Hence $\textsc{over}$ is a limit point of \setC.

We continue this section with some more loopy games in the closure of $(\setC,wd)$ as limits of sidling sequences. Since $wd$ is defined on loopy games as well as \setC, we need only find sequences that approach games in order to show they are in $\overline{\setC}$. Any sequence that gets arbitrarily close to a limit point is Cauchy, and hence we need not bother explicitly demonstrating that any of our sequences are Cauchy.

The game $\textsc{on}$, as we discussed above, can be realized as an infinite directed blue path. The digraph $D(n)$, a directed blue path of length $n$, approximates $\textsc{on}$, since $wd(n,\textsc{on}) = \sum_{i=n}^\infty (\frac{1}{2})^i = 2^{1-n} \rightarrow 0$. Of general mathematical interest is the fact that the sequence $(n)$ is Cauchy under $wd$, which differs significantly from the sequence $(n)$ of positive integers under the Euclidean metric.

Next we consider the game $\textsc{upon}= \{\textsc{upon}|*\}$, in which the Left player can pass and the Right player can move to $*$. For an approximation sequence for this game we first need to introduce \emph{uptimals}.

\begin{Definition}
The game denoted $(x:y)$, called the \emph{ordinal sum} of $x$ and $y$, where $x,y\in \setC$, allows Left (alternatively Right) to play to $(x:y^L)$ or to $x^L$ (alternatively $(x:y^R)$ or $x^R$), where $g^P$ is an option from $g$ for player $P$. The game $\uparrow ^n = (*:n) - (*:(n-1))$. Additionally, the game $\uparrow^{[n]} = (*:n)-* = \sum_{i=1}^n \uparrow^i$. This latter game is also sometimes denoted $\uparrow^{\rightarrow n}$.
\end{Definition}

The obvious negatives of these hold: $-\uparrow^n = \downarrow_n$ and $-\uparrow^{[n]} = -\downarrow_{[n]}$. For further discussion of uptimals and their motivation from the ruleset $\textsc{Hackenbush}$ see page 93 of \cite{SiegelCGT:2013}. For our purposes, we only need that the canonical form of $\uparrow^n = \{0|\downarrow_{[n-1]}\!*\}$, and that $\uparrow^{[n]} = \{\uparrow^{[n-1]}|*\}$. Sidling $\textsc{upon}$ from below yields the following sequence, as demonstrated in \cite{SiegelCGT:2013}.

$$0,\{0|*\}=\uparrow, \{\uparrow|*\}=\uparrow + \uparrow^2, \{\uparrow + \uparrow^2|*\} = \uparrow + \uparrow^2 + \uparrow^3, \ldots = (\uparrow^{[n]})$$                

See Fig. \ref{fig:uptimals} for the first few non-trivial terms of this sequence.

\begin{figure}[ht]
\centering
\begin{minipage}{0.24\textwidth}
\centering
\resizebox{!}{5cm}{
\def\DownBendAngle{40}
\def\UpBendAngle{45}
\def\SourceBendAngle{70}

\begin{tikzpicture}[>=Stealth, auto, node distance=2cm, line width=1.2pt]
\tikzset{
  L_edge/.style={->, very thick, blue},
  R_edge/.style={->, very thick, red},
  loop_large/.style={loop, in=135, out=45, looseness=20, distance=50pt}
}
\node[circle, inner sep=1pt, minimum size=4pt, draw=black, line width=0.5pt, fill=white] (N1) at (0.0, 6.0) {};
\node[circle, inner sep=1pt, minimum size=4pt, draw=black, line width=0.5pt, fill=white] (N2) at (0.0, 0.0) {};
\node[circle, inner sep=1pt, minimum size=4pt, draw=black, line width=0.5pt, fill=white] (N3) at (0.0, 3.0) {};
\draw[L_edge] (N1) to[bend right=\DownBendAngle] (N2);
\draw[R_edge] (N1) to[bend left=\DownBendAngle] (N3);
\draw[L_edge] (N3) to[bend right=\DownBendAngle] (N2);
\draw[R_edge] (N3) to[bend left=\DownBendAngle] (N2);
\end{tikzpicture}}
\end{minipage}%
\begin{minipage}{0.24\textwidth}
  \centering
\resizebox{!}{5cm}{
\def\DownBendAngle{40}
\def\UpBendAngle{45}
\def\SourceBendAngle{70}

\begin{tikzpicture}[>=Stealth, auto, node distance=2cm, line width=1.2pt]
\tikzset{
  L_edge/.style={->, very thick, blue},
  R_edge/.style={->, very thick, red},
  loop_large/.style={loop, in=135, out=45, looseness=20, distance=50pt}
}
\node[circle, inner sep=1pt, minimum size=4pt, draw=black, line width=0.5pt, fill=white] (N1) at (0.0, 9.0) {};
\node[circle, inner sep=1pt, minimum size=4pt, draw=black, line width=0.5pt, fill=white] (N2) at (0.0, 6.0) {};
\node[circle, inner sep=1pt, minimum size=4pt, draw=black, line width=0.5pt, fill=white] (N3) at (0.0, 0.0) {};
\node[circle, inner sep=1pt, minimum size=4pt, draw=black, line width=0.5pt, fill=white] (N4) at (0.0, 3.0) {};
\draw[L_edge] (N1) to[bend right=\DownBendAngle] (N2);
\draw[R_edge] (N1) to[bend left=\DownBendAngle] (N4);
\draw[L_edge] (N2) to[bend right=\DownBendAngle] (N3);
\draw[R_edge] (N2) to[bend left=\DownBendAngle] (N4);
\draw[L_edge] (N4) to[bend right=\DownBendAngle] (N3);
\draw[R_edge] (N4) to[bend left=\DownBendAngle] (N3);
\end{tikzpicture}}
\end{minipage}%
\begin{minipage}{0.24\textwidth}
  \centering
\resizebox{!}{5cm}{
\def\DownBendAngle{40}
\def\UpBendAngle{45}
\def\SourceBendAngle{70}

\begin{tikzpicture}[>=Stealth, auto, node distance=2cm, line width=1.2pt]
\tikzset{
  L_edge/.style={->, very thick, blue},
  R_edge/.style={->, very thick, red},
  loop_large/.style={loop, in=135, out=45, looseness=20, distance=50pt}
}
\node[circle, inner sep=1pt, minimum size=4pt, draw=black, line width=0.5pt, fill=white] (N1) at (0.0, 12.0) {};
\node[circle, inner sep=1pt, minimum size=4pt, draw=black, line width=0.5pt, fill=white] (N2) at (0.0, 9.0) {};
\node[circle, inner sep=1pt, minimum size=4pt, draw=black, line width=0.5pt, fill=white] (N3) at (0.0, 6.0) {};
\node[circle, inner sep=1pt, minimum size=4pt, draw=black, line width=0.5pt, fill=white] (N4) at (0.0, 0.0) {};
\node[circle, inner sep=1pt, minimum size=4pt, draw=black, line width=0.5pt, fill=white] (N5) at (0.0, 3.0) {};
\draw[L_edge] (N1) to[bend right=\DownBendAngle] (N2);
\draw[R_edge] (N1) to[bend left=\DownBendAngle] (N5);
\draw[L_edge] (N2) to[bend right=\DownBendAngle] (N3);
\draw[R_edge] (N2) to[bend left=\DownBendAngle] (N5);
\draw[L_edge] (N3) to[bend right=\DownBendAngle] (N4);
\draw[R_edge] (N3) to[bend left=\DownBendAngle] (N5);
\draw[L_edge] (N5) to[bend right=\DownBendAngle] (N4);
\draw[R_edge] (N5) to[bend left=\DownBendAngle] (N4);
\end{tikzpicture}}
\end{minipage}%
\begin{minipage}{0.24\textwidth}
  \centering
\resizebox{!}{5cm}{
\def\DownBendAngle{40}
\def\UpBendAngle{45}
\def\SourceBendAngle{70}

\begin{tikzpicture}[>=Stealth, auto, node distance=2cm, line width=1.2pt]
\tikzset{
  L_edge/.style={->, very thick, blue},
  R_edge/.style={->, very thick, red},
  loop_large/.style={loop, in=135, out=45, looseness=20, distance=50pt}
}
\node[circle, inner sep=1pt, minimum size=4pt, draw=black, line width=0.5pt, fill=white] (N1) at (0.0, 15.0) {};
\node[circle, inner sep=1pt, minimum size=4pt, draw=black, line width=0.5pt, fill=white] (N2) at (0.0, 12.0) {};
\node[circle, inner sep=1pt, minimum size=4pt, draw=black, line width=0.5pt, fill=white] (N3) at (0.0, 9.0) {};
\node[circle, inner sep=1pt, minimum size=4pt, draw=black, line width=0.5pt, fill=white] (N4) at (0.0, 6.0) {};
\node[circle, inner sep=1pt, minimum size=4pt, draw=black, line width=0.5pt, fill=white] (N5) at (0.0, 0.0) {};
\node[circle, inner sep=1pt, minimum size=4pt, draw=black, line width=0.5pt, fill=white] (N6) at (0.0, 3.0) {};
\draw[L_edge] (N1) to[bend right=\DownBendAngle] (N2);
\draw[R_edge] (N1) to[bend left=\DownBendAngle] (N6);
\draw[L_edge] (N2) to[bend right=\DownBendAngle] (N3);
\draw[R_edge] (N2) to[bend left=\DownBendAngle] (N6);
\draw[L_edge] (N3) to[bend right=\DownBendAngle] (N4);
\draw[R_edge] (N3) to[bend left=\DownBendAngle] (N6);
\draw[L_edge] (N4) to[bend right=\DownBendAngle] (N5);
\draw[R_edge] (N4) to[bend left=\DownBendAngle] (N6);
\draw[L_edge] (N6) to[bend right=\DownBendAngle] (N5);
\draw[R_edge] (N6) to[bend left=\DownBendAngle] (N5);
\end{tikzpicture}}
\end{minipage}%
\caption{The games $\uparrow, \uparrow^{[2]}, \uparrow^{[3]}, \uparrow^{[4]}$}
\label{fig:uptimals}
\end{figure}
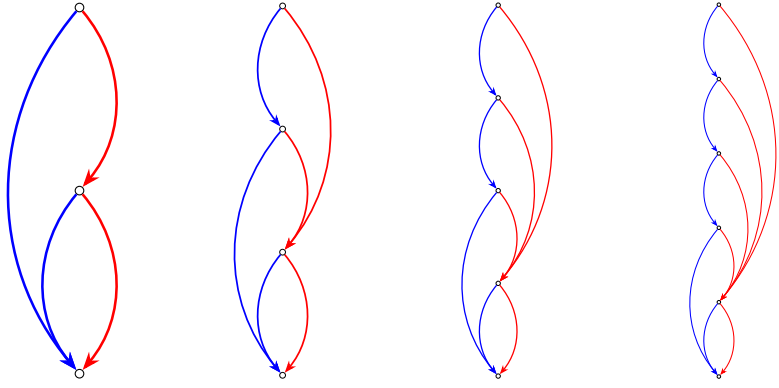

Just as we did with terms in the approximation sequences for $\textsc{over}$, we can embed each term in this sequence as a subgraph of any later term in the sequence. Consider \textsc{upon} to be realized by the infinite DAG that matches every graph in the sequence $(\uparrow^{[n]})$ up to but discluding the edges directed to $0$. In other words, an infinite red path with blue edges from all nodes to $*$.

Given $n\in \mathbb{N}$ we can see that 

$$wd(\uparrow^{[n]},\textsc{upon}) \leq \underbrace{\frac{1}{2^{n-1}}}_\text{rem blue edge to $0$} + \underbrace{\sum_{i=n}^\infty \left(\frac{1}{2^{i-1}} + \frac{1}{2^{i}} \right)}_\text{add blue edge to new node, red edge to $*$} = 2^{3-n}$$ 

which goes to $0$ as $n\rightarrow \infty$. Hence $\textsc{upon}$ is the limit of the sequence $(\uparrow^{[n]})$ and is in the closure of $(\setC,wd)$. We can similarly show that $\uparrow^{\textsc{on}}=\{0|\downarrow_{[on]}\!*\}$ is the limit of the sequence $(\uparrow^{n})$.


\subsection{Breaking away from sidling sequences}
As noted above the sidling process often yields an infinite sequence of games, which we have seen can converge under the metric $wd$ to certain loopy games. Let's examine some non-stopper loopy games as limit points of Cauchy sequences in $(\setC,wd)$.

Consider the sequence $(\sigma_n) = (\{n+1|n\})$, where each term is a switch. It's easy to see that this sequence is Cauchy, as the difference between terms is a blue path.

$$wd(\sigma_n,\sigma_m) \leq \sum_{i=m}^n \frac{1}{2^i}= 2^{1-m}-2^{-n} \leq 2^{1-m} \rightarrow 0$$

The mean of the terms increases without bound, and in fact the terms approach a loopy game that plays exactly like $\textsc{on}$, i.e. the infinite DAG $\sigma$ that $(\sigma_n)$ approaches is equivalent to \textsc{on}. If we extend the blue path indefinitely, we see the limit point is the game $\{\textsc{on}|\textsc{on}\}$ which is equivalent to $\textsc{on}$. However, the sequence $(\sigma_n)$ is clearly not equivalent to the sequence $(n)$ as Cauchy sequences which also approximates $\textsc{on}$. Therefore, the choice between studying the closure of $(\setC,wd)$ by adding the limit points, or the completion of $(\setC,wd)$ by considering equivalence classes of Cauchy sequences, is an important distinction. This is uncommon in metric spaces and is the result of the process of simplification of combinatorial games.

The switch $\{\textsc{over}|\textsc{under}\}$, composed of the infinitesimal stoppers \textsc{over} and $\textsc{under} = -\textsc{over}$, is also a stopper. While we could consider sidling sequences for $\{\textsc{over}|\textsc{under}\}$, it is more interesting to consider a non-sidling sequence that converges to $\{\textsc{over}|\textsc{under}\}$.

We have already seen that the sequence $(*+(\uparrow\!*)\times n)$ converges to $\textsc{over}$. Now consider the sequence of switches $$(\{*+(\uparrow\!*)\times n|*+(\downarrow\!*)\times n\}) = (\zeta_n).$$

Since the DAG $D(\zeta_n)$ does not collapse under consideration of the canonical form, we need only examine whether or not the sequence converges to our target loopy game. We have already seen how the sequence $(*+(\uparrow\!*)\times n)$ approaches $\textsc{over}$ (and thus similarly how $(*+(\downarrow\!*)\times n)$ approaches $\textsc{under}$) and so we know that

$$wd(\zeta_n,\{\textsc{over}|\textsc{under}\})\leq 2\cdot wd(*+(\uparrow\!*)\times n,\textsc{over}) \rightarrow 0.$$

Hence $(\zeta_n)$ is a sequence that converges to a loopy game neither from above nor below. Every term of $(\zeta_n)$ is incomparable to $\{\textsc{over}|\textsc{under}\}$.

Next we consider an interesting non-stopper. The game $\textsc{off}=-\textsc{on}$, so let $\chi = \{\textsc{on} |\{\chi |\textsc{off} \} \}$. In Fig. \ref{fig:chis}a $\chi$ is the upper left vertex. Although Left can win moving first or whenever they have a turn from $\chi$, if Right moves first then the game alternates forever. This loopy game can be realized as an infinite DAG with alternating moves wherein any two successive turns by either player leads to an infinite path of their color.

Recall that $\pm1 = \{1|-1\}$ and let $(\chi_n)$ be the sequence 
$$\pm 1, \{3||\pm 1|-2\}, \{5||||3||\pm 1|-2|||-4\}, \ldots$$
We can write this as a recurrence relation.
$$\chi_1 = \pm 1, \chi_{n+1} = \{2n+1|\{\chi_n|-2n\}\}$$

As $n$ increases gameplay in $\chi_n$ approximates play in the loopy game $\chi$. Fig. \ref{fig:chis} shows a few terms of this sequence. 

Now let's show that $(\chi_n)\rightarrow \chi$.

\begin{align*}
wd(\chi_n,\chi)  \leq   &
\underbrace{\frac{2}{2^{2n-1}}}_{\text{rem edges to $0$}}
+ \underbrace{\frac{1}{2^{2n-2}}}_{\text{rem edge to $-1$}} + \\
 &\sum_{i=n}^\infty
\Bigl(
  \underbrace{\frac{1}{2^{2i-2}}+\frac{1}{2^{2i-1}}}_{\text{new alternating edges}}
  + \underbrace{\frac{1}{2^{2i-1}}+\frac{1}{2^{2i}}}_{\text{new edges to integers}}
  + \underbrace{\frac{2}{2^{2i-1}}+\frac{2}{2^{2i}}}_{\text{new integer edges}}
\Bigr)
\end{align*}

which simplifies to $2^{3-2n}+5\cdot 4^{1-n}$. This goes to $0$ as $n\rightarrow \infty$, and hence $\chi \in \overline{\setC}$.

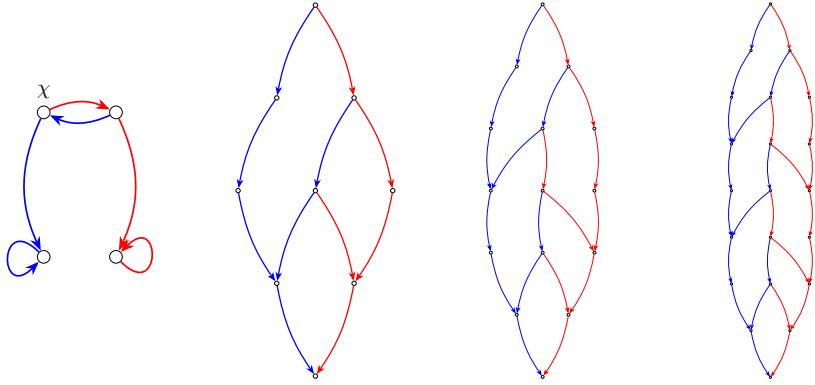
\begin{figure}[ht]
\centering
\begin{minipage}{0.24\textwidth}
  \centering
  \resizebox{!}{3cm}{%
\begin{tikzpicture}[>=Stealth,scale=1.2]
  \node[circle,draw,fill=white,minimum size=6pt,inner sep=0pt] (a) [label=above:{$\chi$}] at (0,0) {};
  \node[circle,draw,fill=white,minimum size=6pt,inner sep=0pt] (b) at (1,0) {};
  \node[circle,draw,fill=white,minimum size=6pt,inner sep=0pt] (c) at (0,-2) {};
  \node[circle,draw,fill=white,minimum size=6pt,inner sep=0pt] (d) at (1,-2) {};
  \draw[->,red,thick,bend left=25] (a) to (b);
  \draw[->,blue,thick,bend left=25] (b) to (a);
  \draw[->,blue,thick,bend right=25] (a) to (c);
  \draw[->,red,thick,bend left=25] (b) to (d);
  \draw[->,blue,thick,looseness=8,min distance=8mm,out=135,in=225] (c) to (c);
  \draw[->,red,thick,looseness=8,min distance=8mm,out=-45,in=45] (d) to (d);
\end{tikzpicture}
  }
\end{minipage}%
\begin{minipage}{0.24\textwidth}
  \centering
  \resizebox{!}{5cm}{
\def\DownBendAngle{12}
\def\UpBendAngle{45}
\def\SourceBendAngle{70}

\begin{tikzpicture}[>=Stealth, auto, node distance=2cm, line width=1.2pt]
\tikzset{
  L_edge/.style={->, very thick, blue},
  R_edge/.style={->, very thick, red},
  loop_large/.style={loop, in=135, out=45, looseness=20, distance=50pt}
}
\node[circle, inner sep=1pt, minimum size=4pt, draw=black, line width=0.5pt, fill=white] (N1) at (0.0, 12.0) {};
\node[circle, inner sep=1pt, minimum size=4pt, draw=black, line width=0.5pt, fill=white] (N2) at (-1.25, 9.0) {};
\node[circle, inner sep=1pt, minimum size=4pt, draw=black, line width=0.5pt, fill=white] (N3) at (-2.5, 6.0) {};
\node[circle, inner sep=1pt, minimum size=4pt, draw=black, line width=0.5pt, fill=white] (N4) at (-1.25, 3.0) {};
\node[circle, inner sep=1pt, minimum size=4pt, draw=black, line width=0.5pt, fill=white] (N5) at (0.0, 0.0) {};
\node[circle, inner sep=1pt, minimum size=4pt, draw=black, line width=0.5pt, fill=white] (N6) at (1.25, 9.0) {};
\node[circle, inner sep=1pt, minimum size=4pt, draw=black, line width=0.5pt, fill=white] (N7) at (0.0, 6.0) {};
\node[circle, inner sep=1pt, minimum size=4pt, draw=black, line width=0.5pt, fill=white] (N8) at (1.25, 3.0) {};
\node[circle, inner sep=1pt, minimum size=4pt, draw=black, line width=0.5pt, fill=white] (N9) at (2.5, 6.0) {};
\draw[L_edge] (N1) to[bend right=\DownBendAngle] (N2);
\draw[R_edge] (N1) to[bend left=\DownBendAngle] (N6);
\draw[L_edge] (N2) to[bend right=\DownBendAngle] (N3);
\draw[L_edge] (N3) to[bend right=\DownBendAngle] (N4);
\draw[L_edge] (N4) to[bend right=\DownBendAngle] (N5);
\draw[L_edge] (N6) to[bend right=\DownBendAngle] (N7);
\draw[R_edge] (N6) to[bend left=\DownBendAngle] (N9);
\draw[L_edge] (N7) to[bend right=\DownBendAngle] (N4);
\draw[R_edge] (N7) to[bend left=\DownBendAngle] (N8);
\draw[R_edge] (N8) to[bend left=\DownBendAngle] (N5);
\draw[R_edge] (N9) to[bend left=\DownBendAngle] (N8);
\end{tikzpicture}}
\end{minipage}%
\begin{minipage}{0.24\textwidth}
  \centering
  \resizebox{!}{5cm}{
\def\DownBendAngle{12}
\def\UpBendAngle{45}
\def\SourceBendAngle{70}

\begin{tikzpicture}[>=Stealth, auto, node distance=2cm, line width=1.2pt]
\tikzset{
  L_edge/.style={->, very thick, blue},
  R_edge/.style={->, very thick, red},
  loop_large/.style={loop, in=135, out=45, looseness=20, distance=50pt}
}
\node[circle, inner sep=1pt, minimum size=4pt, draw=black, line width=0.5pt, fill=white] (N1) at (0.0, 18.0) {};
\node[circle, inner sep=1pt, minimum size=4pt, draw=black, line width=0.5pt, fill=white] (N2) at (-1.25, 15.0) {};
\node[circle, inner sep=1pt, minimum size=4pt, draw=black, line width=0.5pt, fill=white] (N3) at (-2.5, 12.0) {};
\node[circle, inner sep=1pt, minimum size=4pt, draw=black, line width=0.5pt, fill=white] (N4) at (-2.5, 9.0) {};
\node[circle, inner sep=1pt, minimum size=4pt, draw=black, line width=0.5pt, fill=white] (N5) at (-2.5, 6.0) {};
\node[circle, inner sep=1pt, minimum size=4pt, draw=black, line width=0.5pt, fill=white] (N6) at (-1.25, 3.0) {};
\node[circle, inner sep=1pt, minimum size=4pt, draw=black, line width=0.5pt, fill=white] (N7) at (0.0, 0.0) {};
\node[circle, inner sep=1pt, minimum size=4pt, draw=black, line width=0.5pt, fill=white] (N8) at (1.25, 15.0) {};
\node[circle, inner sep=1pt, minimum size=4pt, draw=black, line width=0.5pt, fill=white] (N9) at (0.0, 12.0) {};
\node[circle, inner sep=1pt, minimum size=4pt, draw=black, line width=0.5pt, fill=white] (N10) at (0.0, 9.0) {};
\node[circle, inner sep=1pt, minimum size=4pt, draw=black, line width=0.5pt, fill=white] (N11) at (0.0, 6.0) {};
\node[circle, inner sep=1pt, minimum size=4pt, draw=black, line width=0.5pt, fill=white] (N12) at (1.25, 3.0) {};
\node[circle, inner sep=1pt, minimum size=4pt, draw=black, line width=0.5pt, fill=white] (N13) at (2.5, 6.0) {};
\node[circle, inner sep=1pt, minimum size=4pt, draw=black, line width=0.5pt, fill=white] (N14) at (2.5, 12.0) {};
\node[circle, inner sep=1pt, minimum size=4pt, draw=black, line width=0.5pt, fill=white] (N15) at (2.5, 9.0) {};
\draw[L_edge] (N1) to[bend right=\DownBendAngle] (N2);
\draw[R_edge] (N1) to[bend left=\DownBendAngle] (N8);
\draw[L_edge] (N2) to[bend right=\DownBendAngle] (N3);
\draw[L_edge] (N3) to[bend right=\DownBendAngle] (N4);
\draw[L_edge] (N4) to[bend right=\DownBendAngle] (N5);
\draw[L_edge] (N5) to[bend right=\DownBendAngle] (N6);
\draw[L_edge] (N6) to[bend right=\DownBendAngle] (N7);
\draw[L_edge] (N8) to[bend right=\DownBendAngle] (N9);
\draw[R_edge] (N8) to[bend left=\DownBendAngle] (N14);
\draw[L_edge] (N9) to[bend right=\DownBendAngle] (N4);
\draw[R_edge] (N9) to[bend left=\DownBendAngle] (N10);
\draw[L_edge] (N10) to[bend right=\DownBendAngle] (N11);
\draw[R_edge] (N10) to[bend left=\DownBendAngle] (N13);
\draw[L_edge] (N11) to[bend right=\DownBendAngle] (N6);
\draw[R_edge] (N11) to[bend left=\DownBendAngle] (N12);
\draw[R_edge] (N12) to[bend left=\DownBendAngle] (N7);
\draw[R_edge] (N13) to[bend left=\DownBendAngle] (N12);
\draw[R_edge] (N14) to[bend left=\DownBendAngle] (N15);
\draw[R_edge] (N15) to[bend left=\DownBendAngle] (N13);
\end{tikzpicture}}
\end{minipage}%
\begin{minipage}{0.24\textwidth}
  \centering
  \resizebox{!}{5cm}{
\def\DownBendAngle{12}
\def\UpBendAngle{45}
\def\SourceBendAngle{70}

\begin{tikzpicture}[>=Stealth, auto, node distance=2cm, line width=1.2pt]
\tikzset{
  L_edge/.style={->, very thick, blue},
  R_edge/.style={->, very thick, red},
  loop_large/.style={loop, in=135, out=45, looseness=20, distance=50pt}
}
\node[circle, inner sep=1pt, minimum size=4pt, draw=black, line width=0.5pt, fill=white] (N1) at (0.0, 24.0) {};
\node[circle, inner sep=1pt, minimum size=4pt, draw=black, line width=0.5pt, fill=white] (N2) at (-1.25, 21.0) {};
\node[circle, inner sep=1pt, minimum size=4pt, draw=black, line width=0.5pt, fill=white] (N3) at (-2.5, 18.0) {};
\node[circle, inner sep=1pt, minimum size=4pt, draw=black, line width=0.5pt, fill=white] (N4) at (-2.5, 15.0) {};
\node[circle, inner sep=1pt, minimum size=4pt, draw=black, line width=0.5pt, fill=white] (N5) at (-2.5, 12.0) {};
\node[circle, inner sep=1pt, minimum size=4pt, draw=black, line width=0.5pt, fill=white] (N6) at (-2.5, 9.0) {};
\node[circle, inner sep=1pt, minimum size=4pt, draw=black, line width=0.5pt, fill=white] (N7) at (-2.5, 6.0) {};
\node[circle, inner sep=1pt, minimum size=4pt, draw=black, line width=0.5pt, fill=white] (N8) at (-1.25, 3.0) {};
\node[circle, inner sep=1pt, minimum size=4pt, draw=black, line width=0.5pt, fill=white] (N9) at (0.0, 0.0) {};
\node[circle, inner sep=1pt, minimum size=4pt, draw=black, line width=0.5pt, fill=white] (N10) at (1.25, 21.0) {};
\node[circle, inner sep=1pt, minimum size=4pt, draw=black, line width=0.5pt, fill=white] (N11) at (0.0, 18.0) {};
\node[circle, inner sep=1pt, minimum size=4pt, draw=black, line width=0.5pt, fill=white] (N12) at (0.0, 15.0) {};
\node[circle, inner sep=1pt, minimum size=4pt, draw=black, line width=0.5pt, fill=white] (N13) at (0.0, 12.0) {};
\node[circle, inner sep=1pt, minimum size=4pt, draw=black, line width=0.5pt, fill=white] (N14) at (0.0, 9.0) {};
\node[circle, inner sep=1pt, minimum size=4pt, draw=black, line width=0.5pt, fill=white] (N15) at (0.0, 6.0) {};
\node[circle, inner sep=1pt, minimum size=4pt, draw=black, line width=0.5pt, fill=white] (N16) at (1.25, 3.0) {};
\node[circle, inner sep=1pt, minimum size=4pt, draw=black, line width=0.5pt, fill=white] (N17) at (2.5, 6.0) {};
\node[circle, inner sep=1pt, minimum size=4pt, draw=black, line width=0.5pt, fill=white] (N18) at (2.5, 12.0) {};
\node[circle, inner sep=1pt, minimum size=4pt, draw=black, line width=0.5pt, fill=white] (N19) at (2.5, 9.0) {};
\node[circle, inner sep=1pt, minimum size=4pt, draw=black, line width=0.5pt, fill=white] (N20) at (2.5, 18.0) {};
\node[circle, inner sep=1pt, minimum size=4pt, draw=black, line width=0.5pt, fill=white] (N21) at (2.5, 15.0) {};
\draw[L_edge] (N1) to[bend right=\DownBendAngle] (N2);
\draw[R_edge] (N1) to[bend left=\DownBendAngle] (N10);
\draw[L_edge] (N2) to[bend right=\DownBendAngle] (N3);
\draw[L_edge] (N3) to[bend right=\DownBendAngle] (N4);
\draw[L_edge] (N4) to[bend right=\DownBendAngle] (N5);
\draw[L_edge] (N5) to[bend right=\DownBendAngle] (N6);
\draw[L_edge] (N6) to[bend right=\DownBendAngle] (N7);
\draw[L_edge] (N7) to[bend right=\DownBendAngle] (N8);
\draw[L_edge] (N8) to[bend right=\DownBendAngle] (N9);
\draw[L_edge] (N10) to[bend right=\DownBendAngle] (N11);
\draw[R_edge] (N10) to[bend left=\DownBendAngle] (N20);
\draw[L_edge] (N11) to[bend right=\DownBendAngle] (N4);
\draw[R_edge] (N11) to[bend left=\DownBendAngle] (N12);
\draw[L_edge] (N12) to[bend right=\DownBendAngle] (N13);
\draw[R_edge] (N12) to[bend left=\DownBendAngle] (N18);
\draw[L_edge] (N13) to[bend right=\DownBendAngle] (N6);
\draw[R_edge] (N13) to[bend left=\DownBendAngle] (N14);
\draw[L_edge] (N14) to[bend right=\DownBendAngle] (N15);
\draw[R_edge] (N14) to[bend left=\DownBendAngle] (N17);
\draw[L_edge] (N15) to[bend right=\DownBendAngle] (N8);
\draw[R_edge] (N15) to[bend left=\DownBendAngle] (N16);
\draw[R_edge] (N16) to[bend left=\DownBendAngle] (N9);
\draw[R_edge] (N17) to[bend left=\DownBendAngle] (N16);
\draw[R_edge] (N18) to[bend left=\DownBendAngle] (N19);
\draw[R_edge] (N19) to[bend left=\DownBendAngle] (N17);
\draw[R_edge] (N20) to[bend left=\DownBendAngle] (N21);
\draw[R_edge] (N21) to[bend left=\DownBendAngle] (N18);
\end{tikzpicture}}
\end{minipage}%
\caption{The loopy game $\chi$ and the $2^{nd}, 3^{rd}$, and $4^{th}$ terms of an approximation sequence}
\label{fig:chis}
\end{figure}

Let's add another non-stopper to the closure of $(\setC,wd)$. Consider the game $\psi = \{0,\psi|0,\psi\}$ as seen in Fig. \ref{fig:psis}a.  

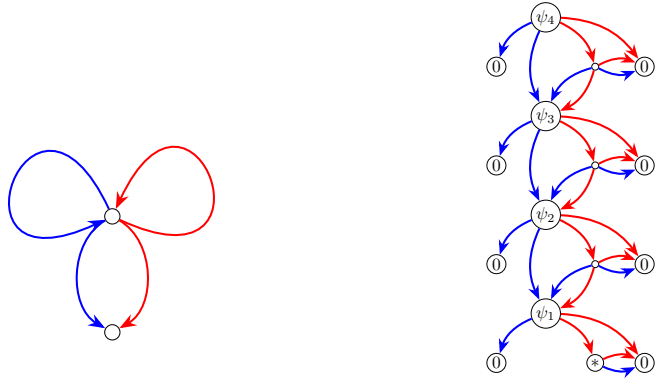
\begin{figure}[ht]
\centering
\begin{minipage}{0.48\textwidth}
  \centering
  \resizebox{!}{4cm}{%
  \begin{tikzpicture}[scale=3.2,>=Stealth]
  \node[circle,draw,fill=white,minimum size=6pt,inner sep=0pt] (A) at (0,0) {};
  \node[circle,draw,fill=white,minimum size=6pt,inner sep=0pt] (B) at (0,-.5) {};
  \draw[->,blue,thick]
    (A) edge[out=115,in=205,loop,looseness=8,min distance=8mm] (A);
  \draw[->,red,thick]
    (A) edge[out=-25,in=65,loop,looseness=8,min distance=8mm] (A);
  \draw[->,red,thick,bend left=60] (A) to (B);
  \draw[->,blue,thick,bend right=60] (A) to (B);
\end{tikzpicture}
  } 
\end{minipage}%
\begin{minipage}{0.48\textwidth}
  \centering
  \resizebox{!}{5cm}{

\begin{tikzpicture}[>=Stealth, auto, node distance=2cm, line width=1.2pt]
\tikzset{
  L_edge/.style={->, very thick, blue},
  R_edge/.style={->, very thick, red},
  loop_large/.style={loop, in=135, out=45, looseness=20, distance=50pt}
}
\node[circle, inner sep=1pt, minimum size=4pt, draw=black, line width=0.5pt, fill=white] (0) at (-1.0, 1,0) {$0$};
\node[circle, inner sep=1pt, minimum size=4pt, draw=black, line width=0.5pt, fill=white] (star) at (1.0, 1) {$*$};
\node[circle, inner sep=1pt, minimum size=4pt, draw=black, line width=0.5pt, fill=white] (psi1) at (0.0, 2.0) {$\psi_1$};
\node[circle, inner sep=1pt, minimum size=4pt, draw=black, line width=0.5pt, fill=white]
(01R) at (2.0, 1) {$0$};

\node[circle, inner sep=1pt, minimum size=4pt, draw=black, line width=0.5pt, fill=white]
(psi2) at (0.0, 4.0) {$\psi_2$};
\node[circle, inner sep=1pt, minimum size=4pt, draw=black, line width=0.5pt, fill=white]
(psi22) at (1.0, 3) {};
\node[circle, inner sep=1pt, minimum size=4pt, draw=black, line width=0.5pt, fill=white]
(02R) at (2.0, 3) {$0$};
\node[circle, inner sep=1pt, minimum size=4pt, draw=black, line width=0.5pt, fill=white]
(02L) at (-1.0, 3) {$0$};

\node[circle, inner sep=1pt, minimum size=4pt, draw=black, line width=0.5pt, fill=white]
(psi3) at (0.0, 6.0) {$\psi_3$};
\node[circle, inner sep=1pt, minimum size=4pt, draw=black, line width=0.5pt, fill=white]
(psi32) at (1.0, 5) {};
\node[circle, inner sep=1pt, minimum size=4pt, draw=black, line width=0.5pt, fill=white]
(03R) at (2.0, 5) {$0$};
\node[circle, inner sep=1pt, minimum size=4pt, draw=black, line width=0.5pt, fill=white]
(03L) at (-1.0, 5) {$0$};

\node[circle, inner sep=1pt, minimum size=4pt, draw=black, line width=0.5pt, fill=white]
(psi4) at (0.0,8.0) {$\psi_4$};
\node[circle, inner sep=1pt, minimum size=4pt, draw=black, line width=0.5pt, fill=white]
(psi42) at (1.0, 7) {};
\node[circle, inner sep=1pt, minimum size=4pt, draw=black, line width=0.5pt, fill=white]
(04R) at (2.0, 7) {$0$};
\node[circle, inner sep=1pt, minimum size=4pt, draw=black, line width=0.5pt, fill=white]
(04L) at (-1.0, 7) {$0$};

\draw[L_edge] (psi4) to[bend right=25] (psi3);
\draw[L_edge] (psi4) to[bend right=25] (04L);
\draw[R_edge] (psi4) to[bend left=25] (psi42);
\draw[R_edge] (psi4) to[bend left=25] (04R);
\draw[L_edge] (psi42) to[bend right=25] (psi3);
\draw[R_edge] (psi42) to[bend left=25] (psi3);
\draw[L_edge] (psi42) to[bend right=25] (04R);
\draw[R_edge] (psi42) to[bend left=25] (04R);

\draw[L_edge] (psi3) to[bend right=25] (psi2);
\draw[L_edge] (psi3) to[bend right=25] (03L);
\draw[R_edge] (psi3) to[bend left=25] (psi32);
\draw[R_edge] (psi3) to[bend left=25] (03R);
\draw[L_edge] (psi32) to[bend right=25] (psi2);
\draw[R_edge] (psi32) to[bend left=25] (psi2);
\draw[L_edge] (psi32) to[bend right=25] (03R);
\draw[R_edge] (psi32) to[bend left=25] (03R);
 
\draw[L_edge] (psi2) to[bend right=25] (psi1);
\draw[L_edge] (psi2) to[bend right=25] (02L);
\draw[R_edge] (psi2) to[bend left=25] (psi22);
\draw[R_edge] (psi2) to[bend left=25] (02R);
\draw[L_edge] (psi22) to[bend right=25] (psi1);
\draw[R_edge] (psi22) to[bend left=25] (psi1);
\draw[L_edge] (psi22) to[bend right=25] (02R);
\draw[R_edge] (psi22) to[bend left=25] (02R);

\draw[L_edge] (psi1) to[bend right=25] (0);
\draw[R_edge] (psi1) to[bend left=25] (star);
\draw[R_edge] (psi1) to[bend left=25] (01R);
\draw[R_edge] (star) to[bend left=25] (01R);
\draw[L_edge] (star) to[bend right=25] (01R);
\end{tikzpicture}}
\end{minipage}%
\caption{The game $\psi$ and a term of an approximation sequence. Some nodes are deidentified for clarity.}
\label{fig:psis}
\end{figure}

The game $\psi$ can be approximated by the recurrence relation:
$$\psi_1 = \downarrow\!* = \{0|*,0\}, \psi_{n+1} = \{\psi_n,0|\{\psi_n,0|\psi_n,0\},0\}$$

For any $n$ all non-trivial followers of $\psi_n$ have both Left and Right options to $0$, as well as Left and Right options to positions that themselves have moves to $0$ and to previous terms in the sequence. If we extend the approximation indefinitely by adding blue and red edges from $*$ and a blue edge from $\psi_1$ in Fig. \ref{fig:psis}b to a new $\psi_i$ and continuing the chain of $\psi_i$'s, then we get an infinite DAG equivalent to $\psi$. We then see that for any $n$ the distance between $\psi_n$ and $\psi$ is bounded above by:

$$wd(\psi_n,\psi) \leq \underbrace{\frac{1}{2^{n-1}}}_\text{new blue from $\psi_1$} + \underbrace{\frac{2}{2^n}}_\text{new edges from $*$} + \sum_{i=n}^\infty \underbrace{\left(\frac{4}{2^i} + \frac{4}{2^{i+1}} \right)}_\text{each new block} = 2^{4-n}$$

which approaches $0$ as $n\rightarrow 0$.

We now proceed to the loopy game with the most extreme sides possible, the game $\textsc{dud}=\{\textsc{dud}|\textsc{dud}\}$. In \textsc{dud} both players only have the option to pass, and its sides are \textsc{on} and \textsc{off}. The game is clearly not a stopper, nor is it even an ender as it is impossible for the game to end. Interestingly any game, short or long, added to \textsc{dud} results in \textsc{dud}.

Consider the sequence $(\delta_n)$ defined by the following recurrence relation. 
$$ \delta_1 = \{3|2||1|0\}, \delta_{n+1} = \{\delta_n + F_{n+4}|\delta_n\}$$

where $F_k$ is the $k^{th}$ term in the Fibonacci sequence, beginning with $F_1=F_2=1$. This yields the following first few terms. Repeated values are highlighted in color to show identified nodes in the resulting DAGs.

\[
\hspace*{-1.5in}
\begin{aligned}
\delta_1 &=& \{3|2||1|0\}\\ 
\delta_2 &=& \{6|5||4|\ccyan{3}|||\ccyan{3}|2||1|0\}\\
\delta_3 &=& \{11|10||9|\ccyan{8}|||\ccyan{8}|7||\corange{6|5}||||\corange{6|5}||4|\ccyan{3}|||\ccyan{3}|2||1|0\}\\
\end{aligned}
\]

See Fig. \ref{fig:duds} for an illustration of how the DAGs for this sequence progress.

\begin{figure}[ht]
\centering
\setlength{\unitlength}{1pt} 
\newlength{\tikzcommonheight}
\setlength{\tikzcommonheight}{3.5cm} 

\begin{minipage}[b]{0.48\textwidth}
  \centering
  \resizebox{!}{\tikzcommonheight}{%
\begin{tikzpicture}[scale=4.2,>=Stealth]
  \node[circle,draw,fill=white,minimum size=6pt,inner sep=0pt] (a) at (0,0) {};
  \draw[->,blue,thick]
    (a) edge[out=135,in=225,loop,looseness=8,min distance=8mm]
    node[above left=1pt] {} (a);
  \draw[->,red,thick]
    (a) edge[out=-45,in=45,loop,looseness=8,min distance=8mm]
    node[below right=1pt] {} (a);
\end{tikzpicture}
 } 
\end{minipage}\hfill
\begin{minipage}[b]{0.48\textwidth}
  \centering
  \resizebox{!}{\tikzcommonheight}{
\begin{tikzpicture}[>=Stealth, auto, node distance=2cm, line width=1.2pt]
\tikzset{
  L_edge/.style={->, very thick, blue},
  R_edge/.style={->, very thick, red},
  loop_large/.style={loop, in=135, out=45, looseness=20, distance=50pt}
}
\node[circle, inner sep=1pt, minimum size=4pt, draw=black, line width=0.5pt, fill=white] (N1) at (0.0, 6.0) {};
\node[circle, inner sep=1pt, minimum size=4pt, draw=black, line width=0.5pt, fill=white] (N2) at (-1.25, 3.0) {};
\node[circle, inner sep=1pt, minimum size=4pt, draw=black, line width=0.5pt, fill=white] (N3) at (-3.75, 0.0) {};
\node[circle, inner sep=1pt, minimum size=4pt, draw=black, line width=0.5pt, fill=white] (N4) at (-1.25, 0.0) {};
\node[circle, inner sep=1pt, minimum size=4pt, draw=black, line width=0.5pt, fill=white] (N5) at (1.25, 0.0) {};
\node[circle, inner sep=1pt, minimum size=4pt, draw=black, line width=0.5pt, fill=white] (N6) at (3.75, 0.0) {};
\node[circle, inner sep=1pt, minimum size=4pt, draw=black, line width=0.5pt, fill=white] (N7) at (1.25, 3.0) {};
\draw[L_edge] (N1) to[bend right=12] (N2);
\draw[R_edge] (N1) to[bend left=12] (N7);
\draw[L_edge] (N2) to[bend right=12] (N3);
\draw[R_edge] (N2) to[bend left=12] (N4);
\draw[L_edge] (N3) to[bend right=12] (N4);
\draw[L_edge] (N4) to[bend right=12] (N5);
\draw[L_edge] (N5) to[bend right=12] (N6);
\draw[L_edge] (N7) to[bend right=12] (N5);
\draw[R_edge] (N7) to[bend left=12] (N6);
\end{tikzpicture}}
\end{minipage}

\vspace{1em}

  \centering
  \resizebox{!}{\tikzcommonheight}{
\begin{tikzpicture}[>=Stealth, auto, node distance=2cm, line width=1.2pt]
\tikzset{
  L_edge/.style={->, very thick, blue},
  R_edge/.style={->, very thick, red},
  loop_large/.style={loop, in=135, out=45, looseness=20, distance=50pt}
}
\node[circle, inner sep=1pt, minimum size=4pt, draw=black, line width=0.5pt, fill=white] (N1) at (0.0, 9.0) {};
\node[circle, inner sep=1pt, minimum size=4pt, draw=black, line width=0.5pt, fill=white] (N2) at (-1.25, 6.0) {};
\node[circle, inner sep=1pt, minimum size=4pt, draw=black, line width=0.5pt, fill=white] (N3) at (-3.75, 3.0) {};
\node[circle, inner sep=1pt, minimum size=4pt, draw=black, line width=0.5pt, fill=white] (N4) at (-7.5, 0.0) {};
\node[circle, inner sep=1pt, minimum size=4pt, draw=black, line width=0.5pt, fill=white] (N5) at (-5.0, 0.0) {};
\node[circle, inner sep=1pt, minimum size=4pt, draw=black, line width=0.5pt, fill=white] (N6) at (-2.5, 0.0) {};
\node[circle, inner sep=1pt, minimum size=4pt, draw=black, line width=0.5pt, fill=white] (N7) at (0.0, 0.0) {};
\node[circle, inner sep=1pt, minimum size=4pt, draw=black, line width=0.5pt, fill=white] (N8) at (2.5, 0.0) {};
\node[circle, inner sep=1pt, minimum size=4pt, draw=black, line width=0.5pt, fill=white] (N9) at (5.0, 0.0) {};
\node[circle, inner sep=1pt, minimum size=4pt, draw=black, line width=0.5pt, fill=white] (N10) at (7.5, 0.0) {};
\node[circle, inner sep=1pt, minimum size=4pt, draw=black, line width=0.5pt, fill=white] (N11) at (-1.25, 3.0) {};
\node[circle, inner sep=1pt, minimum size=4pt, draw=black, line width=0.5pt, fill=white] (N12) at (1.25, 6.0) {};
\node[circle, inner sep=1pt, minimum size=4pt, draw=black, line width=0.5pt, fill=white] (N13) at (1.25, 3.0) {};
\node[circle, inner sep=1pt, minimum size=4pt, draw=black, line width=0.5pt, fill=white] (N14) at (3.75, 3.0) {};
\draw[L_edge] (N1) to[bend right=12] (N2);
\draw[R_edge] (N1) to[bend left=12] (N12);
\draw[L_edge] (N2) to[bend right=12] (N3);
\draw[R_edge] (N2) to[bend left=12] (N11);
\draw[L_edge] (N3) to[bend right=12] (N4);
\draw[R_edge] (N3) to[bend left=12] (N5);
\draw[L_edge] (N4) to[bend right=12] (N5);
\draw[L_edge] (N5) to[bend right=12] (N6);
\draw[L_edge] (N6) to[bend right=12] (N7);
\draw[L_edge] (N7) to[bend right=12] (N8);
\draw[L_edge] (N8) to[bend right=12] (N9);
\draw[L_edge] (N9) to[bend right=12] (N10);
\draw[L_edge] (N11) to[bend right=12] (N6);
\draw[R_edge] (N11) to[bend left=12] (N7);
\draw[L_edge] (N12) to[bend right=12] (N13);
\draw[R_edge] (N12) to[bend left=12] (N14);
\draw[L_edge] (N13) to[bend right=12] (N7);
\draw[R_edge] (N13) to[bend left=12] (N8);
\draw[L_edge] (N14) to[bend right=12] (N9);
\draw[R_edge] (N14) to[bend left=12] (N10);
\end{tikzpicture}}

\vspace{1em}

  \centering
  \resizebox{\textwidth}{!}{\input{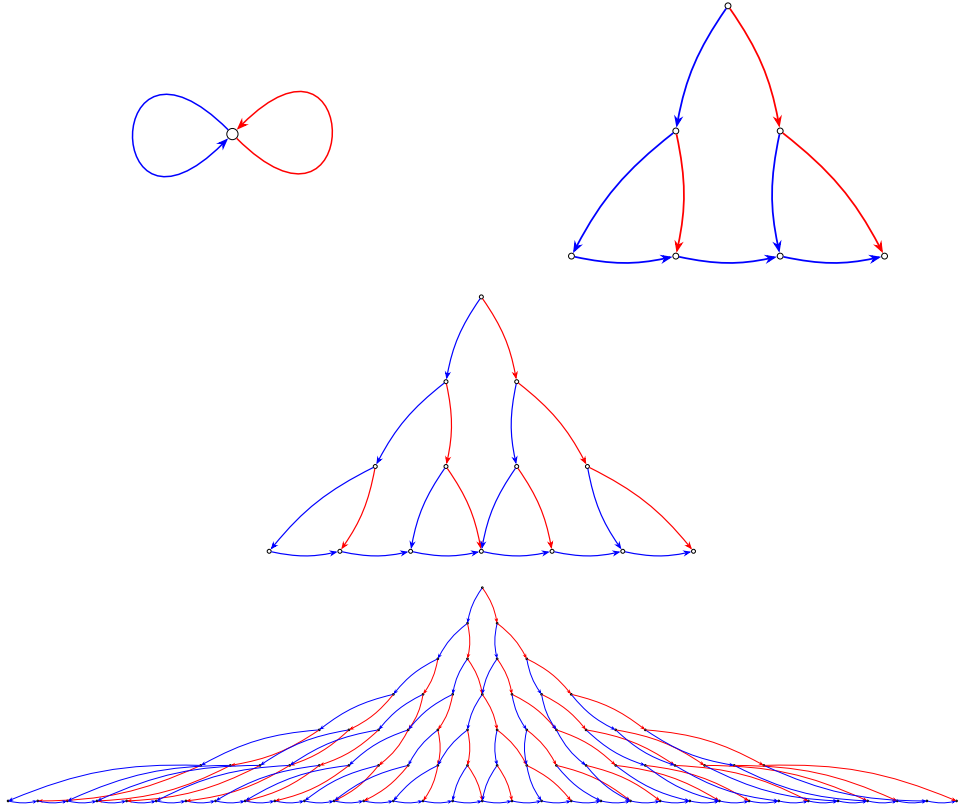}}

\caption{The plumtree \textsc{dud} and terms $1,2$, and $5$ of its approximation sequence}
\label{fig:duds}
\end{figure}

Each term contains all previous terms of the sequence after the removal of the edges representing the final integer values. When $n>m>N\in \mathbb{N}$ we see that

$$wd(\delta_n,\delta_m) \leq \underbrace{\left(\frac{1}{2}\right)^{m+1}\left(F_{m+4}-2\right)}_\text{remove integer path} + \underbrace{\sum_{i=m+1}^n 2\left(F_{i+4}-2\right)\left(\frac{1}{2}\right)^i}_\text{add edges to options} + \underbrace{\left(\frac{1}{2}\right)^{n+1}\left(F_{n+4}-2\right)}_\text{add integer path}.$$

Binet's formula gives the closed formula $F_n = \lfloor \frac{\phi ^n}{\sqrt{5}} \rceil$ where $\phi = \frac{1}{2}(1+\sqrt{5})$ and $\lfloor \cdot \rceil$ indicates nearest integer rounding. This allows us to simplify the distance by bounding $F_k$ above by $\frac{\phi ^{k+1}}{\sqrt{5}}$.

{\small
\[
wd(\delta_n,\delta_m) \leq \left(\frac{1}{2}\right)^{m+1}\left(\frac{\phi ^{m+5}}{\sqrt{5}}-2\right) + \sum_{i=m+1}^n 2\left(\frac{\phi ^{i+5}}{\sqrt{5}}-2\right)\left(\frac{1}{2}\right)^i + \left(\frac{1}{2}\right)^{n+1}\left(\frac{\phi ^{n+5}}{\sqrt{5}}-2\right).
\]
}
 
This is further bounded above by

$$\frac{\phi^4}{\sqrt{5}}\left(\frac{\phi}{2}\right)^{m+1}-2^{-m}+\frac{\phi^4}{\sqrt{5}}\left(\frac{\phi}{2}\right)^{n+1}-2^{-n} + \frac{2\phi^5}{\sqrt{5}} \sum_{i=m+1}^{n}\left(\frac{1}{2}\right)^i.$$

Since $\sum_{k=m+1}^n \left(\frac{1}{2}\right)^k = (2^{-m}-2^{-n})$ we get an upper bound of 

$$\frac{\phi^4}{\sqrt{5}}\left(\frac{\phi}{2}\right)^{m+1}-2^{-m}+\frac{\phi^4}{\sqrt{5}}\left(\frac{\phi}{2}\right)^{n+1}-2^{-n} + \frac{2\phi^5}{\sqrt{5}} \left(2^{-m}-2^{-n}\right).$$

As $\frac{\phi}{2} < 1$, this $\rightarrow 0$ as $N\rightarrow \infty$, and hence the sequence $(\delta_n)$ is Cauchy. 

Now let's examine the structure of the terms in the sequence. At every position in the game $\delta_n$ both Left and Right have an option to another position in the game until one player reaches an integer value, at which point the game stops. One DAG equivalent to \textsc{dud} is in fact an infinite binary tree, which our sequence does not approximate, and due to the rapid growth of binary trees of height $n$ as $n$ increases no such sequence of finite games will be Cauchy under $wd$. However, as $n$ increases, the number of turns in $\delta_n$ until either player reaches an integer also increases. As we saw above with two non-equivalent sequences in \setC\ that approach \textsc{on}, we conclude that the sequence $(\delta_n)$ approximates $\textsc{dud}$. 

Other than approximating a non-stopper, this sequence is interesting in the fact that it can be altered to sit inside different outcome classes. Note that $\forall n, \delta_n \in \mathcal{L}$. However, the sequence $(-\delta_n)$ approximates $\textsc{dud}$ just as well and sits entirely in $\mathcal{R}$. In fact, we can shift the terms and, with a small alteration, ensure they are all in $\mathcal{N}$.

\[
\begin{alignedat}{3}
\delta_1^\mathcal{N} &=& \{2|1|&|-1|-2|\}\\
\delta_2^\mathcal{N} &=& \{3|2||1|0|&||0|-1||-2|-3\}\\
\delta_3^\mathcal{N} &=& \{6|5||4|3|||3|2||1|-1||&||1|-1||-2|-3|||-3|-4||-5|-6\}
\end{alignedat}
\]

The games in $(\delta_n^\mathcal{N})$ grow in a very similar way to those in $(\delta_n)$, and so we omit the arithmetic showing that this sequence is also Cauchy.

\subsection{Non plum trees}\label{subsec:non-plum}

Let's turn our attention to non-short games that are not plum trees. 

Bach's Carousel (Fig. \ref{fig:carousel}a) is a rather well-known construction, developed by Clive Bach as discussed in \cite{WinningWays:2001} as an example of a set of non-stopper sided loopy games. We next prove that there is no sequence of games in \setC\ that has a loopy game from Bach's Carousel as a limit point.

\begin{Theorem}
No loopy game in Bach's Carousel is the limit of a Cauchy sequence of games in $(\setC,wd)$. 
\end{Theorem}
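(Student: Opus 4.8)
The plan is to show that membership of a loopy game $\gamma$ in Bach's Carousel is incompatible with being a $wd$-limit of short games, by combining a locality property of $wd$ with the defining feature of the Carousel: its positions are not stopper-sided, i.e. neither the onside nor the offside of such a position can be realized by a stopper (see \cite{WinningWays:2001}).

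First I would record the locality property. If $wd(G,H)<2^{-k}$ then a minimum-cost edit sequence between $D(G)$ and $D(H)$ contains no edit at an edge whose initial vertex has depth $\le k$, since one such edit already costs $(\tfrac12)^{\mathrm{depth}}\ge 2^{-k}$, exceeding the total budget. Hence $D(G)$ and $D(H)$ have identical induced structure on the vertices of depth $\le k$ together with all their out-edges. Read through the extension of $wd$ to loopy games, this says: if $wd(G_n,\gamma)<2^{-k}$ then there is an infinite DAG $H\lequiv\gamma$ that agrees with the finite DAG $D(G_n)$ up to depth $k$.

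Next, assuming for contradiction a Cauchy sequence $(G_n)\to\gamma$ with $G_n\in\setC$, I would pass to a subsequence along which the agreement depths from the previous paragraph grow to infinity, and also (using Cauchyness of $(G_n)$) along which the tails of the weighted edge counts $\sum_{e}(\tfrac12)^{\mathrm{depth}(e)}$ of the $D(G_n)$ beyond depth $m$ are uniformly small. The truncations then cohere and assemble into a single infinite DAG $H^{*}$ that agrees with $\gamma$ at every finite depth; since play in any sum $J+\gamma$ only ever disagrees with $J+\textsc{tree}(H^{*})$ after finitely many moves (and infinite plays are draws on both sides), $H^{*}\lequiv\gamma$. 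Moreover $H^{*}$ is \emph{thin}: $\sum_{e}(\tfrac12)^{\mathrm{depth}(e)}<\infty$, equivalently the total cost of its edges beyond depth $m$ tends to $0$. So the theorem reduces to: no position of Bach's Carousel admits a thin equivalent infinite DAG.

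Finally I would argue the reduced statement from the cyclic, non-stopper-sided structure of the Carousel: any infinite DAG equivalent to one of its positions must re-instantiate the full cyclic move pattern at arbitrarily large depths, and because that pattern cannot be replaced by, or capped off with, any stopper (this is precisely non-stopper-sidedness), the ineliminable structure surviving below depth $m$ contributes cost bounded away from $0$ for infinitely many $m$, contradicting thinness of $H^{*}$. I expect this last step to be the main obstacle: making precise and quantitatively lower-bounding the "ineliminable" structure that non-stopper-sidedness forces at every depth — in effect showing that possessing a thin (finite-weight) equivalent infinite DAG implies being stopper-sided, the Carousel being the canonical counterexample. A careful use of the onside/offside calculus for loopy games, as in \cite{SiegelCGT:2013} and \cite{ConwayLoopy:1978}, will be required here, together with the observation that every short game is a stopper, so that a "cheap cap" would in particular be a stopper.
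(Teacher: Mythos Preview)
Your reduction in the third step is not merely an admitted gap; it is false as stated. The straightforward unfolding of Bach's Carousel into an infinite DAG---a single spine $\alpha_1\to\beta_1\to\gamma_1\to\delta_1\to\alpha_2\to\cdots$ together with the side-edges into the finite games $0,1,*,1*$---has bounded out-degree at every depth, so its total weighted edge count $\sum_e(\tfrac12)^{\mathrm{depth}(e)}$ is a convergent geometric series. Thus a position of the Carousel \emph{does} admit a thin equivalent infinite DAG, and ``no thin equivalent DAG'' cannot be what distinguishes it. Non-stopper-sidedness alone does not force unbounded branching at every depth; it only says that no stopper bounds the game from the relevant side, which is a statement about values, not about edge counts in equivalent DAGs.

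What you are missing is the role of \emph{canonical form}, which is the entire content of $\setC$ and which never appears in your outline. The paper's argument is short and concrete: your locality step already gives that for large $n$ the DAG $D(G_n)$ must match the Carousel near the root, so $G_n$ has a Left option to $0$, a Right option to $0$, and a Left option to some $\beta_1$ approximating $\beta$ (which in turn has options to $1$ and to some $\gamma_1$, etc.). A direct play-through shows $\beta_1>0$: Left moving first goes to $1$; Right moving first alternates down the spine until reaching an $\alpha_2$, from which Left moves to $0$. Hence Left's option from $G_n$ to $0$ is dominated by the option to $\beta_1$, so $G_n$ is not in canonical form---contradicting $G_n\in\setC$. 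No thinness, no onside/offside calculus, no appeal to non-stopper-sidedness is needed. Your more ambitious conjecture, that non-stopper-sided games are never in $\overline{\setC}$, is in fact posed as an open question at the end of the paper.
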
\label{thm:bachs_carousel}
\begin{proof}
Let us assume that $(g_n)$, a sequence of games in $(\setC,wd)$, gets arbitrarily close to a loopy game in Bach's Carousel. Without loss of generality we may choose any of the four loopy games in the system as a root, so let's assume our short games have $\alpha_1$ as a root, and that this sequence of short games approximates $\alpha$. The first few terms of $\alpha_1$ in any $g_n$ for sufficiently large $n$ must have both Left and Right options to $0$, and a Left option to an approximate of $\beta$, say $\beta_1$. See Fig. \ref{fig:carousel} for a generalization of the terms in the sequence $(g_n)$, where $x$ is some short game. We demonstrate that $\beta_1>0$, and hence $\alpha_1$'s Left option to $0$ is dominated by their option to $\beta_1$. 

If Left moves first on $\beta_1$ then she can move to $1$. If Right moves first, then play will alternate until Right ends their turn on $\alpha_2$, after which Left can move to $0$. Therefore $\beta_1>0$ and Left's option from $\alpha_1$ to $0$ is dominated. Hence, the short games in the sequence $(g_n)$ for sufficiently large $n$ cannot approximate the game $\alpha$ from Bach's Carousel, which requires a Left option from $\alpha$ to $0$.
\end{proof}

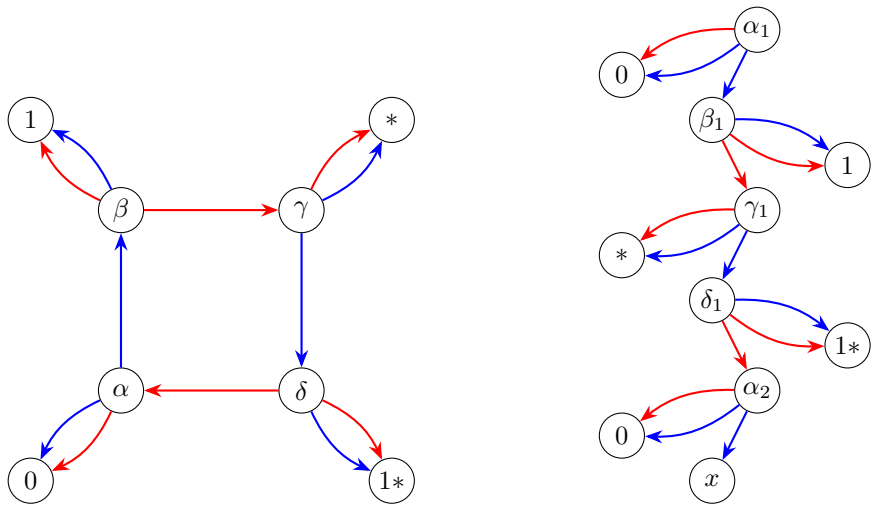
\begin{figure}[ht]
\centering
\begin{minipage}[b]{0.45\textwidth}
\centering
\begin{tikzpicture}[scale=1.2, >=Stealth, every node/.style={circle, draw, minimum size=6mm, inner sep=0pt}]
  \node (1) at (-2,2) {$1$};
  \node (*) at (2,2) {$*$};
  \node (1*) at (2,-2) {$1*$};
  \node (0) at (-2,-2) {$0$};
  \node (alpha) at (-1,-1) {$\alpha$};
  \node (beta) at (-1,1) {$\beta$};
  \node (gamma) at (1,1) {$\gamma$};
  \node (delta) at (1,-1) {$\delta$};

  \draw[->, thick, red] (beta) -- (gamma);
  \draw[->, thick, red] (delta) -- (alpha);

  \draw[->, thick, blue] (alpha) -- (beta);
  \draw[->, thick, blue] (gamma) -- (delta);

  \draw[->, thick, red, bend left=20] (beta) to (1);
  \draw[->, thick, blue, bend right=20] (beta) to (1);

  \draw[->, thick, red, bend left=20] (gamma) to (*);
  \draw[->, thick, blue, bend right=20] (gamma) to (*);

  \draw[->, thick, red, bend left=20] (delta) to (1*);
  \draw[->, thick, blue, bend right=20] (delta) to (1*);

  \draw[->, thick, red, bend left=20] (alpha) to (0);
  \draw[->, thick, blue, bend right=20] (alpha) to (0);
\end{tikzpicture}
\end{minipage}
\hfill
\begin{minipage}[b]{0.45\textwidth}
\centering
\begin{tikzpicture}[scale=1.2, >=Stealth, every node/.style={circle, draw, minimum size=6mm, inner sep=0pt}]
  \node (alpha1) at (0,0) {$\alpha_1$};
  \node (beta1) at (-0.5,-1) {$\beta_1$};
  \node (gamma1) at (0,-2) {$\gamma_1$};
  \node (delta1) at (-0.5,-3) {$\delta_1$};
  \node (alpha2) at (0,-4) {$\alpha_2$};
  \node (x) at (-0.5,-5) {$x$};

  \node (0a) at (-1.5,-0.5) {$0$};
  \node (1a) at (1,-1.5) {$1$};
  \node (*a) at (-1.5,-2.5) {$*$};
  \node (1*a) at (1,-3.5) {$1*$};
  \node (0b) at (-1.5,-4.5) {$0$};

  \draw[->, thick, blue, bend left=20] (alpha1) to (0a);
  \draw[->, thick, red, bend right=20] (alpha1) to (0a);
  \draw[->, thick, blue] (alpha1) -- (beta1);

  \draw[->, thick, blue, bend left=20] (beta1) to (1a);
  \draw[->, thick, red, bend right=20] (beta1) to (1a);
  \draw[->, thick, red] (beta1) -- (gamma1);

  \draw[->, thick, blue, bend left=20] (gamma1) to (*a);
  \draw[->, thick, red, bend right=20] (gamma1) to (*a);
  \draw[->, thick, blue] (gamma1) -- (delta1);

  \draw[->, thick, blue, bend left=20] (delta1) to (1*a);
  \draw[->, thick, red, bend right=20] (delta1) to (1*a);
  \draw[->, thick, red] (delta1) -- (alpha2);

  \draw[->, thick, blue, bend left=20] (alpha2) to (0b);
  \draw[->, thick, red, bend right=20] (alpha2) to (0b);
  \draw[->, thick, blue] (alpha2) -- (x);
\end{tikzpicture}
\end{minipage}
\caption{Bach's Carousel and how a term of its approximation sequence must appear, if it exists. We have labeled a number of non-zero sink vertices in the images to clarify the options from each position.}
\label{fig:carousel}
\end{figure}

A much simpler pair of non-stopper loopy games that are not limit points of sequences in $(\setC,wd)$ are \textsc{tis} and \textsc{tisn}. As defined in \cite{WinningWays:2001}, $\textsc{tis}=\{\textsc{tisn}|\}$ and $\textsc{tisn}=\{|\textsc{tis}\}$ (Fig. \ref{fig:tis_tisn}). 

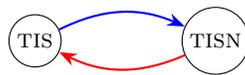
\begin{figure}[ht]
\centering
\begin{tikzpicture}[>=Stealth,scale=1.2]
  \node[circle,draw,fill=white,inner sep=2pt] (tis) at (-1,0) {\textsc{tis}};
  \node[circle,draw,fill=white,inner sep=2pt] (tisn) at (1,0) {\textsc{tisn}};
  \draw[->,thick,blue,bend left=25] (tis) to (tisn);
  \draw[->,thick,red,bend left=25] (tisn) to (tis);
\end{tikzpicture}
\caption{\textsc{tis} and \textsc{tisn}}
\label{fig:tis_tisn}
\end{figure}

\begin{Theorem}
There is no Cauchy sequence in $(\setC,wd)$ that gets arbitrarily close to either \textsc{tis} or \textsc{tisn}.
\end{Theorem}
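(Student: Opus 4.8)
The plan is to argue by contradiction, in the same spirit as the Bach's Carousel argument: a sufficiently good finite approximant to \textsc{tis} must reproduce its forced line of play in a large neighbourhood of the root, and this is impossible for a game in canonical form. First I would reduce to the case of \textsc{tis}. Recolouring all edges (blue $\leftrightarrow$ red) is a source-fixing graph isomorphism preserving every edit cost, since costs depend only on the distance of an edge's tail from the source; it maps $\setC$ to itself and sends the infinite DAG $v_0\xrightarrow{\text{blue}}v_1\xrightarrow{\text{red}}v_2\xrightarrow{\text{blue}}\cdots$ (which is $\lequiv\textsc{tis}$) to $v_0\xrightarrow{\text{red}}v_1\xrightarrow{\text{blue}}v_2\cdots$, the game tree of $-\textsc{tis}=\{\,|{-\textsc{tisn}}\}$ with $-\textsc{tisn}=\{{-\textsc{tis}}\,|\,\}$; as this is precisely the defining recursion of \textsc{tisn}, we get $\textsc{tisn}=-\textsc{tis}$, and a Cauchy sequence converging to \textsc{tisn} is just the recolouring of one converging to \textsc{tis}, with identical $wd$-distances.

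So suppose $(g_n)\subset\setC$ with $wd(g_n,\textsc{tis})\to 0$, and for each $n$ pick an infinite DAG $H_n\lequiv\textsc{tis}$ with $wd(D(g_n),H_n)\to 0$. A preliminary lemma, on which the Cauchy arguments elsewhere in the paper already rely implicitly, is that an edit sequence of total cost $<(\tfrac12)^{r}$ cannot alter the ball of radius $r$ about the source: every individual edit then costs $<(\tfrac12)^{r}$, so its tail lies at distance $\geq r+1$, and adding or deleting an edge out of so deep a vertex changes neither the vertices at distance $\le r$ nor the edges among them. Hence $D(g_n)$ and $H_n$ have isomorphic radius-$r_n$ balls for some $r_n\to\infty$. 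Next I would record the forced play of \textsc{tis}: the root permits a Left move and no Right move; its essential follower \textsc{tisn} permits a Right move and no Left move; whose essential follower is \textsc{tis} again — so the spine of \textsc{tis} is an infinite, sink-free, colour-alternating path along which the mover is never stuck.

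The heart of the argument is a rigidity statement for canonical short games: if $w\in\setC$ has no Right option then $w$ is a nonnegative integer and $D(w)$ is a monochromatic blue path ending at the sink $0$, and dually for no Left option. Using this I would show $D(g_n)$ cannot match any $H_n\lequiv\textsc{tis}$ beyond a bounded radius. If the root of $D(g_n)$ has no red edge, then $D(g_n)$ is a pure blue path to a sink, so its distance-$1$ vertex has only a blue out-edge — whereas every DAG $\lequiv\textsc{tis}$ has there either a red out-edge (the \textsc{tisn}-move) or an extra branch, a discrepancy at distance $\le 1$ costing at least $\tfrac12$ to repair; symmetrically the root of $D(g_n)$ cannot lack a blue edge. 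So the root of $D(g_n)$ is a genuine switch-type position, and it remains to rule this out via canonical form: the ``exotic'' representations of \textsc{tis} with extra options at the root, such as $\{{-1},\textsc{tisn}\mid\,\}$ and $\{\textsc{tisn}\mid\textsc{on}\}$, always carry a Left option reversible through $0$ (valid because $\textsc{tis}\geq 0$: adjoining an idle \textsc{tis}-type component can only help Left, who may escape into the endless loop rather than lose) and a Right option reversible through a follower $\ge\textsc{tis}$, so a canonical $g_n$ cannot literally contain them; matching $H_n$ near the root then forces $H_n$ to look like the minimal alternating path there, and we are back to the previous case. Thus $wd(g_n,\textsc{tis})$ is bounded below by a fixed positive constant, contradicting $wd(g_n,\textsc{tis})\to 0$, which settles \textsc{tis} and hence also \textsc{tisn}.

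The main obstacle is this final step: proving, uniformly over all infinite DAGs $\lequiv\textsc{tis}$, that no canonical short DAG can reproduce the near-root structure of any of them for more than a bounded number of steps. Because the loopy partial order is not captured by the sign of $g_n-\textsc{tis}$ (loopy games lack additive inverses), the comparisons needed — reversibility of the extra options, $\textsc{tis}\ge 0$, and the behaviour of switch-type approximants — must be verified directly from outcomes of sums, and tracking exactly how close to the root the forced discrepancy must lie is the delicate part.
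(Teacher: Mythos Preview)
Your core observation---that a canonical short game with no Right option must be a nonnegative integer, hence a pure blue path, hence unable to match the alternating structure of \textsc{tis}---is exactly the paper's argument. The paper's proof, however, is two sentences: it simply asserts that any short game approximating \textsc{tis} has no Right option, deduces it is an integer, and stops. You correctly notice that this assertion is not obviously justified, since $wd(G,\textsc{tis})$ is an infimum over \emph{all} infinite DAGs $H\lequiv\textsc{tis}$, and some such $H$ might in principle carry extra options at the root.

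Your attempt to handle such ``exotic'' representations via reversibility is where the genuinely new content would be, and as you yourself flag, it is incomplete. You have not actually shown that every $H\lequiv\textsc{tis}$ either has the minimal alternating-path structure near the root or else forces any matching canonical $g_n$ to contain a dominated or reversible option; the examples $\{-1,\textsc{tisn}\mid\,\}$ and $\{\textsc{tisn}\mid\textsc{on}\}$ are suggestive but do not constitute a proof over all equivalent DAGs, and the loopy comparisons you would need (e.g.\ that the added Right option is reversible through something $\geq\textsc{tis}$) are exactly the ``delicate part'' you leave open. The paper simply sidesteps all of this, implicitly treating the alternating path as \emph{the} representative of \textsc{tis}. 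Whether that shortcut is fully rigorous under the paper's own infimum definition is a fair criticism of the paper, but your more careful framework does not close the gap either---it relocates it. In short: same key lemma as the paper, wrapped in a more honest accounting of what needs proving, with that extra part still open.
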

\begin{proof}
If $G$ is a short game in \setC\ that approximates \textsc{tis} then it has no option for Right. But any short game with only Left options simplifies to an integer. Hence no sequence in $(\setC,wd)$ approaches \textsc{tis}. A similar argument shows the same is true for \textsc{tisn}. However, the sidling process shows that the sides of $\textsc{tis}$ are $1 \& 0$, and for $\textsc{tisn}$ are $0 \& -1$, which are not only stoppers but in \setC.
\end{proof}


We finish this section with a couple infinite but loop-free games.

On page 370 of \cite{WinningWays:2001}, along with the discussion of Bach's Carousel, two loop-free infinite games also discovered by Clive Bach are introduced. We deem these games $B_0^1$ and $B_0^2$ (Fig. \ref{fig:bachsum}) after their discoverer. Note that $\frac{*}{2}$, called a \emph{semi-star}, is shorthand for the game $\{*,\uparrow|\downarrow *,0\}$, and has the interesting (but not unique) property that $(\frac{*}{2} + \frac{*}{2}) = *$.

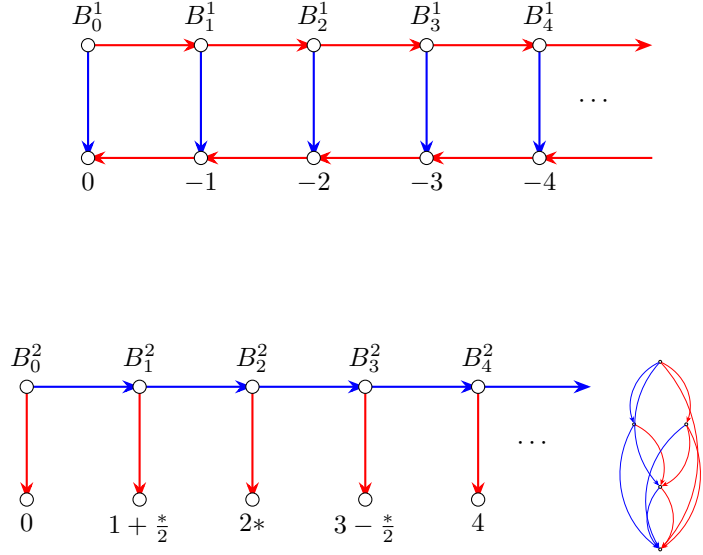
\begin{figure}
\centering

\begin{tikzpicture}[scale=1.5, >=Stealth]
  \tikzset{dot/.style={draw, circle, fill=white, minimum size=5pt, inner sep=0pt}}

  \coordinate (b0) at (0,1);
  \coordinate (b1) at (1,1);
  \coordinate (b2) at (2,1);
  \coordinate (b3) at (3,1);
  \coordinate (b4) at (4,1);
  \coordinate (b5) at (5,1);

  \coordinate (0) at (0,0);
  \coordinate (-1) at (1,0);
  \coordinate (-2) at (2,0);
  \coordinate (-3) at (3,0);
  \coordinate (-4) at (4,0);
  \coordinate (-5) at (5,0);

  \node[above=2pt of b0] {$B_0^1$};
  \node[above=2pt of b1] {$B_1^1$};
  \node[above=2pt of b2] {$B_2^1$};
  \node[above=2pt of b3] {$B_3^1$};
  \node[above=2pt of b4] {$B_4^1$};

  \node[below=2pt of 0] {$0$};
  \node[below=2pt of -1] {$-1$};
  \node[below=2pt of -2] {$-2$};
  \node[below=2pt of -3] {$-3$};
  \node[below=2pt of -4] {$-4$};

  \node at (4.5,0.5) {$\cdots$};

  \draw[->, thick, red] (b0) -- (b1);
  \draw[->, thick, red] (b1) -- (b2);
  \draw[->, thick, red] (b2) -- (b3);
  \draw[->, thick, red] (b3) -- (b4);
  \draw[->, thick, red] (b4) -- (b5);

  \draw[->, thick, blue] (b0) -- (0);
  \draw[->, thick, blue] (b1) -- (-1);
  \draw[->, thick, blue] (b2) -- (-2);
  \draw[->, thick, blue] (b3) -- (-3);
  \draw[->, thick, blue] (b4) -- (-4);

  \draw[->, thick, red] (-5) -- (-4);
  \draw[->, thick, red] (-4) -- (-3);
  \draw[->, thick, red] (-3) -- (-2);
  \draw[->, thick, red] (-2) -- (-1);
  \draw[->, thick, red] (-1) -- (0);

  \foreach \p in {b0,b1,b2,b3,b4,0,-1,-2,-3,-4}
    \node[dot] at (\p) {};
\end{tikzpicture}

\vspace{5em}

\begin{tikzpicture}[scale=1.5, >=Stealth]
  \tikzset{dot/.style={draw, circle, fill=white, minimum size=5pt, inner sep=0pt}}

  \coordinate (b0) at (0,1);
  \coordinate (b1) at (1,1);
  \coordinate (b2) at (2,1);
  \coordinate (b3) at (3,1);
  \coordinate (b4) at (4,1);
  \coordinate (b5) at (5,1);

  \coordinate (0) at (0,0);
  \coordinate (1) at (1,0);
  \coordinate (2) at (2,0);
  \coordinate (3) at (3,0);
  \coordinate (4) at (4,0);
  \coordinate (5) at (5,0);

  \node[above=2pt of b0] {$B_0^2$};
  \node[above=2pt of b1] {$B_1^2$};
  \node[above=2pt of b2] {$B_2^2$};
  \node[above=2pt of b3] {$B_3^2$};
  \node[above=2pt of b4] {$B_4^2$};

  \node[below=2pt of 0] {$0$};
  \node[below=2pt of 1] {$1+\frac{*}{2}$};
  \node[below=2pt of 2] {$2*$};
  \node[below=2pt of 3] {$3-\frac{*}{2}$};
  \node[below=2pt of 4] {$4$};

  \node at (4.5,0.5) {$\cdots$};

  \draw[->, thick, blue] (b0) -- (b1);
  \draw[->, thick, blue] (b1) -- (b2);
  \draw[->, thick, blue] (b2) -- (b3);
  \draw[->, thick, blue] (b3) -- (b4);
  \draw[->, thick, blue] (b4) -- (b5);

  \draw[->, thick, red] (b0) -- (0);
  \draw[->, thick, red] (b1) -- (1);
  \draw[->, thick, red] (b2) -- (2);
  \draw[->, thick, red] (b3) -- (3);
  \draw[->, thick, red] (b4) -- (4);

  \foreach \p in {b0,b1,b2,b3,b4,0,1,2,3,4}
    \node[dot] at (\p) {};
\end{tikzpicture}
  \resizebox{!}{1in}{
\def\DownBendAngle{42}
\def\UpBendAngle{45}
\def\SourceBendAngle{70}

\begin{tikzpicture}[>=Stealth, auto, node distance=2cm, line width=1.2pt]
\tikzset{
  L_edge/.style={->, very thick, blue},
  R_edge/.style={->, very thick, red},
  loop_large/.style={loop, in=135, out=45, looseness=20, distance=50pt}
}
\node[circle, inner sep=1pt, minimum size=4pt, draw=black, line width=0.5pt, fill=white] (N1) at (0.0, 9.0) {};
\node[circle, inner sep=1pt, minimum size=4pt, draw=black, line width=0.5pt, fill=white] (N2) at (-1.25, 6.0) {};
\node[circle, inner sep=1pt, minimum size=4pt, draw=black, line width=0.5pt, fill=white] (N3) at (0.0, 0.0) {};
\node[circle, inner sep=1pt, minimum size=4pt, draw=black, line width=0.5pt, fill=white] (N4) at (0.0, 3.0) {};
\node[circle, inner sep=1pt, minimum size=4pt, draw=black, line width=0.5pt, fill=white] (N5) at (1.25, 6.0) {};
\draw[L_edge] (N1) to[bend right=\DownBendAngle] (N2);
\draw[L_edge] (N1) to[bend right=\DownBendAngle] (N4);
\draw[R_edge] (N1) to[bend left=\DownBendAngle] (N5);
\draw[R_edge] (N1) to[bend left=\DownBendAngle] (N3);
\draw[L_edge] (N2) to[bend right=\DownBendAngle] (N3);
\draw[R_edge] (N2) to[bend left=\DownBendAngle] (N4);
\draw[L_edge] (N4) to[bend right=\DownBendAngle] (N3);
\draw[R_edge] (N4) to[bend left=\DownBendAngle] (N3);
\draw[L_edge] (N5) to[bend right=\DownBendAngle] (N3);
\draw[R_edge] (N5) to[bend left=\DownBendAngle] (N3);
\draw[R_edge] (N5) to[bend left=\DownBendAngle] (N4);
\end{tikzpicture}}
\caption{Two infinite loop-free games and the DAG $D(\frac{*}{2})$. We omit from the second figure the visual mess of arrows required for $0$ to be the only sink.}
\label{fig:bachsum}
\end{figure}

Bach notes that although both $B_0^1$ and $B_0^2$ are stoppers, their sum $B=(B_0^1 + B_0^2)$ is not only not a stopper but is not even stopper-sided. We can approximate both of these games with the sequences $(H_n^1)$ and $(H_n^2)$ defined in the following way.

\[
\hspace*{-2in}
\begin{aligned}
H_1^1 &= *, H_{n+1}^1 &=& \{0|H_n^1-1\} \\
H_1^2 &= *, H_{n+1}^2 &=& \{H_n^2+1+\frac{*}{2}|0\} \\
\end{aligned}
\]

Both of these approximation sequences amount to cutting off the respective infinite game and ending with both Left and Right options to a positive or negative value. The distances between the $n^{th}$ terms and their loopy target under the $wd$ metric are easy to calculate.

{\small
\begin{tabular}{r c c c c c l}
$wd(H_n^1,B_0^1)$ & $\leq$ & 
$\underbrace{\frac{1}{2^n}}_{\text{edge to $-n$}}$ & $+$ &
$\underbrace{\sum_{i=n+1}^{\infty}\frac{3}{2^i}}_{\text{extend DAG}}$ & $=$ & $\dfrac{1}{2^{n-2}}$ \\[1ex]

$wd(H_n^2,B_0^2)$ & $\leq$ &
$\underbrace{\frac{2}{2^n}}_{\text{end edges}}$ & $+$ &
$\sum\limits_{i=n+1}^{\infty} \underbrace{\left(\frac{4}{2^{i+1}} + \frac{5}{2^{i+2}} + \frac{2}{2^{i+3}}\right)}_{\text{over-estimate of edges in $\frac{*}{2}$}}$ & $=$ & $\dfrac{7}{2^{n+1}}+\dfrac{1}{2^{n-1}}$\\
\end{tabular}
}

both of which approach $0$ as $n\rightarrow \infty$.


While the sum of Cauchy sequences is Cauchy under the usual addition operation, since we are using game addition this is not necessarily a given. As a counterexample, consider the following sequences in \setC.

$$(a_n) = \left(1,1,\frac{1}{2},\frac{1}{2},\frac{1}{4},\frac{1}{4},\ldots \right), 
(b_n) = \left(-1,-\frac{1}{2},-\frac{1}{2},-\frac{1}{4},-\frac{1}{4},\ldots \right)
$$

Both sequences are Cauchy since the sequence $\left(\frac{1}{2}\right)^n$ is Cauchy. However, their sum $(a_n+b_n) = (0,\frac{1}{2},0,\frac{1}{4},0,\frac{1}{8},\ldots)$ is not, since it alternates between positive values and $0$ which differ under $wd$ by at least $1$. 

Unfortunately the question of whether or not the sum of the terms $(H_n^1+H_n^2)$ is Cauchy itself remains elusive. Fig. \ref{fig:bach_sum} shows the first few terms of $(H_n^1+H_n^2)$. While the sheer density of the digraphs appears intimidating enough to ward off all but the bravest graph theorists, we already see by the third term a third node appear as an out-neighbor of the source vertex. If new nodes continue to appear close to the source vertex, then it is unlikely that the sequence is Cauchy.

\begin{figure}[ht]
\centering
\begin{minipage}{0.32\textwidth}
  \centering
  \resizebox{!}{5cm}{
\def\DownBendAngle{12}
\def\UpBendAngle{45}
\def\SourceBendAngle{70}

\begin{tikzpicture}[>=Stealth, auto, node distance=2cm, line width=1.2pt]
\tikzset{
  L_edge/.style={->, very thick, blue!70!black},
  R_edge/.style={->, very thick, red!70!black},
  loop_large/.style={loop, in=135, out=45, looseness=20, distance=50pt}
}
\node[circle, inner sep=1pt, minimum size=4pt, draw=black, line width=0.5pt, fill=white] (N1) at (0.0, 18.0) {};
\node[circle, inner sep=1pt, minimum size=4pt, draw=black, line width=0.5pt, fill=white] (N2) at (1.25, 15.0) {};
\node[circle, inner sep=1pt, minimum size=4pt, draw=black, line width=0.5pt, fill=white] (N3) at (-1.25, 12.0) {};
\node[circle, inner sep=1pt, minimum size=4pt, draw=black, line width=0.5pt, fill=white] (N4) at (1.25, 9.0) {};
\node[circle, inner sep=1pt, minimum size=4pt, draw=black, line width=0.5pt, fill=white] (N5) at (-3.75, 6.0) {};
\node[circle, inner sep=1pt, minimum size=4pt, draw=black, line width=0.5pt, fill=white] (N6) at (-2.5, 3.0) {};
\node[circle, inner sep=1pt, minimum size=4pt, draw=black, line width=0.5pt, fill=white] (N7) at (0.0, 0.0) {};
\node[circle, inner sep=1pt, minimum size=4pt, draw=black, line width=0.5pt, fill=white] (N8) at (-3.75, 9.0) {};
\node[circle, inner sep=1pt, minimum size=4pt, draw=black, line width=0.5pt, fill=white] (N9) at (3.75, 9.0) {};
\node[circle, inner sep=1pt, minimum size=4pt, draw=black, line width=0.5pt, fill=white] (N10) at (-1.25, 6.0) {};
\node[circle, inner sep=1pt, minimum size=4pt, draw=black, line width=0.5pt, fill=white] (N11) at (0.0, 3.0) {};
\node[circle, inner sep=1pt, minimum size=4pt, draw=black, line width=0.5pt, fill=white] (N12) at (3.75, 6.0) {};
\node[circle, inner sep=1pt, minimum size=4pt, draw=black, line width=0.5pt, fill=white] (N13) at (-1.25, 15.0) {};
\node[circle, inner sep=1pt, minimum size=4pt, draw=black, line width=0.5pt, fill=white] (N14) at (1.25, 12.0) {};
\node[circle, inner sep=1pt, minimum size=4pt, draw=black, line width=0.5pt, fill=white] (N15) at (1.25, 6.0) {};
\node[circle, inner sep=1pt, minimum size=4pt, draw=black, line width=0.5pt, fill=white] (N16) at (2.5, 3.0) {};
\node[circle, inner sep=1pt, minimum size=4pt, draw=black, line width=0.5pt, fill=white] (N17) at (-1.25, 9.0) {};
\draw[L_edge] (N1) to[bend right=\DownBendAngle] (N2);
\draw[L_edge] (N1) to[bend right=\DownBendAngle] (N13);
\draw[R_edge] (N1) to[bend left=\DownBendAngle] (N14);
\draw[R_edge] (N1) to[bend left=\DownBendAngle] (N17);
\draw[L_edge] (N2) to[bend right=\DownBendAngle] (N3);
\draw[R_edge] (N2) to[bend left=\DownBendAngle] (N9);
\draw[L_edge] (N3) to[bend right=\DownBendAngle] (N4);
\draw[L_edge] (N3) to[bend right=\DownBendAngle] (N6);
\draw[R_edge] (N3) to[bend left=\DownBendAngle] (N8);
\draw[R_edge] (N3) to[bend left=\DownBendAngle] (N5);
\draw[L_edge] (N4) to[bend right=\DownBendAngle] (N5);
\draw[L_edge] (N4) to[bend right=\DownBendAngle] (N6);
\draw[R_edge] (N4) to[bend left=\DownBendAngle] (N6);
\draw[L_edge] (N5) to[bend right=\DownBendAngle] (N6);
\draw[R_edge] (N5) to[bend left=\DownBendAngle] (N6);
\draw[L_edge] (N6) to[bend right=\DownBendAngle] (N7);
\draw[L_edge] (N8) to[bend right=\DownBendAngle] (N5);
\draw[R_edge] (N8) to[bend left=\DownBendAngle] (N6);
\draw[L_edge] (N9) to[bend right=\DownBendAngle] (N10);
\draw[L_edge] (N9) to[bend right=\DownBendAngle] (N11);
\draw[R_edge] (N9) to[bend left=\DownBendAngle] (N12);
\draw[R_edge] (N9) to[bend left=\DownBendAngle] (N7);
\draw[L_edge] (N10) to[bend right=\DownBendAngle] (N7);
\draw[R_edge] (N10) to[bend left=\DownBendAngle] (N11);
\draw[L_edge] (N11) to[bend right=\DownBendAngle] (N7);
\draw[R_edge] (N11) to[bend left=\DownBendAngle] (N7);
\draw[L_edge] (N12) to[bend right=\DownBendAngle] (N7);
\draw[R_edge] (N12) to[bend left=\DownBendAngle] (N7);
\draw[R_edge] (N12) to[bend left=\DownBendAngle] (N11);
\draw[L_edge] (N13) to[bend right=\DownBendAngle] (N3);
\draw[R_edge] (N13) to[bend left=\DownBendAngle] (N7);
\draw[L_edge] (N14) to[bend right=\DownBendAngle] (N9);
\draw[R_edge] (N14) to[bend left=\DownBendAngle] (N15);
\draw[L_edge] (N15) to[bend right=\DownBendAngle] (N16);
\draw[R_edge] (N15) to[bend left=\DownBendAngle] (N16);
\draw[R_edge] (N16) to[bend left=\DownBendAngle] (N7);
\draw[L_edge] (N17) to[bend right=\DownBendAngle] (N7);
\draw[R_edge] (N17) to[bend left=\DownBendAngle] (N15);
\end{tikzpicture}}
\end{minipage}%
\begin{minipage}{0.32\textwidth}
  \centering
  \resizebox{!}{5cm}{
\def\DownBendAngle{12}
\def\UpBendAngle{45}
\def\SourceBendAngle{70}

\begin{tikzpicture}[>=Stealth, auto, node distance=2cm, line width=1.2pt]
\tikzset{
  L_edge/.style={->, very thick, blue!70!black},
  R_edge/.style={->, very thick, red!70!black},
  loop_large/.style={loop, in=135, out=45, looseness=20, distance=50pt}
}
\node[circle, inner sep=1pt, minimum size=4pt, draw=black, line width=0.5pt, fill=white] (N1) at (0.0, 24.0) {};
\node[circle, inner sep=1pt, minimum size=4pt, draw=black, line width=0.5pt, fill=white] (N2) at (-1.25, 21.0) {};
\node[circle, inner sep=1pt, minimum size=4pt, draw=black, line width=0.5pt, fill=white] (N3) at (-1.25, 15.0) {};
\node[circle, inner sep=1pt, minimum size=4pt, draw=black, line width=0.5pt, fill=white] (N4) at (-7.5, 6.0) {};
\node[circle, inner sep=1pt, minimum size=4pt, draw=black, line width=0.5pt, fill=white] (N5) at (-2.5, 3.0) {};
\node[circle, inner sep=1pt, minimum size=4pt, draw=black, line width=0.5pt, fill=white] (N6) at (0.0, 0.0) {};
\node[circle, inner sep=1pt, minimum size=4pt, draw=black, line width=0.5pt, fill=white] (N7) at (-1.25, 12.0) {};
\node[circle, inner sep=1pt, minimum size=4pt, draw=black, line width=0.5pt, fill=white] (N8) at (-6.25, 9.0) {};
\node[circle, inner sep=1pt, minimum size=4pt, draw=black, line width=0.5pt, fill=white] (N9) at (-5.0, 6.0) {};
\node[circle, inner sep=1pt, minimum size=4pt, draw=black, line width=0.5pt, fill=white] (N10) at (1.25, 9.0) {};
\node[circle, inner sep=1pt, minimum size=4pt, draw=black, line width=0.5pt, fill=white] (N11) at (1.25, 18.0) {};
\node[circle, inner sep=1pt, minimum size=4pt, draw=black, line width=0.5pt, fill=white] (N12) at (1.25, 12.0) {};
\node[circle, inner sep=1pt, minimum size=4pt, draw=black, line width=0.5pt, fill=white] (N13) at (6.25, 9.0) {};
\node[circle, inner sep=1pt, minimum size=4pt, draw=black, line width=0.5pt, fill=white] (N14) at (0.0, 6.0) {};
\node[circle, inner sep=1pt, minimum size=4pt, draw=black, line width=0.5pt, fill=white] (N15) at (0.0, 3.0) {};
\node[circle, inner sep=1pt, minimum size=4pt, draw=black, line width=0.5pt, fill=white] (N16) at (7.5, 6.0) {};
\node[circle, inner sep=1pt, minimum size=4pt, draw=black, line width=0.5pt, fill=white] (N17) at (-2.5, 6.0) {};
\node[circle, inner sep=1pt, minimum size=4pt, draw=black, line width=0.5pt, fill=white] (N18) at (3.75, 15.0) {};
\node[circle, inner sep=1pt, minimum size=4pt, draw=black, line width=0.5pt, fill=white] (N19) at (3.75, 12.0) {};
\node[circle, inner sep=1pt, minimum size=4pt, draw=black, line width=0.5pt, fill=white] (N20) at (3.75, 9.0) {};
\node[circle, inner sep=1pt, minimum size=4pt, draw=black, line width=0.5pt, fill=white] (N21) at (2.5, 6.0) {};
\node[circle, inner sep=1pt, minimum size=4pt, draw=black, line width=0.5pt, fill=white] (N22) at (2.5, 3.0) {};
\node[circle, inner sep=1pt, minimum size=4pt, draw=black, line width=0.5pt, fill=white] (N23) at (-3.75, 9.0) {};
\node[circle, inner sep=1pt, minimum size=4pt, draw=black, line width=0.5pt, fill=white] (N24) at (1.25, 15.0) {};
\node[circle, inner sep=1pt, minimum size=4pt, draw=black, line width=0.5pt, fill=white] (N25) at (-1.25, 18.0) {};
\node[circle, inner sep=1pt, minimum size=4pt, draw=black, line width=0.5pt, fill=white] (N26) at (1.25, 21.0) {};
\node[circle, inner sep=1pt, minimum size=4pt, draw=black, line width=0.5pt, fill=white] (N27) at (-3.75, 12.0) {};
\node[circle, inner sep=1pt, minimum size=4pt, draw=black, line width=0.5pt, fill=white] (N28) at (-1.25, 9.0) {};
\node[circle, inner sep=1pt, minimum size=4pt, draw=black, line width=0.5pt, fill=white] (N29) at (5.0, 6.0) {};
\node[circle, inner sep=1pt, minimum size=4pt, draw=black, line width=0.5pt, fill=white] (N30) at (-3.75, 15.0) {};
\draw[L_edge] (N1) to[bend right=\DownBendAngle] (N2);
\draw[L_edge] (N1) to[bend right=\DownBendAngle] (N25);
\draw[R_edge] (N1) to[bend left=\DownBendAngle] (N26);
\draw[R_edge] (N1) to[bend left=\DownBendAngle] (N30);
\draw[L_edge] (N2) to[bend right=\DownBendAngle] (N3);
\draw[R_edge] (N2) to[bend left=\DownBendAngle] (N11);
\draw[L_edge] (N3) to[bend right=\DownBendAngle] (N4);
\draw[R_edge] (N3) to[bend left=\DownBendAngle] (N7);
\draw[L_edge] (N4) to[bend right=\DownBendAngle] (N5);
\draw[L_edge] (N5) to[bend right=\DownBendAngle] (N6);
\draw[L_edge] (N7) to[bend right=\DownBendAngle] (N8);
\draw[L_edge] (N7) to[bend right=\DownBendAngle] (N9);
\draw[R_edge] (N7) to[bend left=\DownBendAngle] (N10);
\draw[R_edge] (N7) to[bend left=\DownBendAngle] (N5);
\draw[L_edge] (N8) to[bend right=\DownBendAngle] (N5);
\draw[R_edge] (N8) to[bend left=\DownBendAngle] (N9);
\draw[L_edge] (N9) to[bend right=\DownBendAngle] (N5);
\draw[R_edge] (N9) to[bend left=\DownBendAngle] (N5);
\draw[L_edge] (N10) to[bend right=\DownBendAngle] (N5);
\draw[R_edge] (N10) to[bend left=\DownBendAngle] (N5);
\draw[R_edge] (N10) to[bend left=\DownBendAngle] (N9);
\draw[L_edge] (N11) to[bend right=\DownBendAngle] (N12);
\draw[L_edge] (N11) to[bend right=\DownBendAngle] (N17);
\draw[R_edge] (N11) to[bend left=\DownBendAngle] (N18);
\draw[R_edge] (N11) to[bend left=\DownBendAngle] (N24);
\draw[L_edge] (N12) to[bend right=\DownBendAngle] (N5);
\draw[R_edge] (N12) to[bend left=\DownBendAngle] (N13);
\draw[L_edge] (N13) to[bend right=\DownBendAngle] (N14);
\draw[L_edge] (N13) to[bend right=\DownBendAngle] (N15);
\draw[R_edge] (N13) to[bend left=\DownBendAngle] (N16);
\draw[R_edge] (N13) to[bend left=\DownBendAngle] (N6);
\draw[L_edge] (N14) to[bend right=\DownBendAngle] (N6);
\draw[R_edge] (N14) to[bend left=\DownBendAngle] (N15);
\draw[L_edge] (N15) to[bend right=\DownBendAngle] (N6);
\draw[R_edge] (N15) to[bend left=\DownBendAngle] (N6);
\draw[L_edge] (N16) to[bend right=\DownBendAngle] (N6);
\draw[R_edge] (N16) to[bend left=\DownBendAngle] (N6);
\draw[R_edge] (N16) to[bend left=\DownBendAngle] (N15);
\draw[L_edge] (N17) to[bend right=\DownBendAngle] (N5);
\draw[R_edge] (N17) to[bend left=\DownBendAngle] (N15);
\draw[L_edge] (N18) to[bend right=\DownBendAngle] (N13);
\draw[R_edge] (N18) to[bend left=\DownBendAngle] (N19);
\draw[L_edge] (N19) to[bend right=\DownBendAngle] (N20);
\draw[L_edge] (N19) to[bend right=\DownBendAngle] (N22);
\draw[R_edge] (N19) to[bend left=\DownBendAngle] (N23);
\draw[R_edge] (N19) to[bend left=\DownBendAngle] (N21);
\draw[L_edge] (N20) to[bend right=\DownBendAngle] (N21);
\draw[L_edge] (N20) to[bend right=\DownBendAngle] (N22);
\draw[R_edge] (N20) to[bend left=\DownBendAngle] (N22);
\draw[L_edge] (N21) to[bend right=\DownBendAngle] (N22);
\draw[R_edge] (N21) to[bend left=\DownBendAngle] (N22);
\draw[R_edge] (N22) to[bend left=\DownBendAngle] (N6);
\draw[L_edge] (N23) to[bend right=\DownBendAngle] (N21);
\draw[R_edge] (N23) to[bend left=\DownBendAngle] (N22);
\draw[L_edge] (N24) to[bend right=\DownBendAngle] (N15);
\draw[R_edge] (N24) to[bend left=\DownBendAngle] (N19);
\draw[L_edge] (N25) to[bend right=\DownBendAngle] (N3);
\draw[R_edge] (N25) to[bend left=\DownBendAngle] (N6);
\draw[L_edge] (N26) to[bend right=\DownBendAngle] (N11);
\draw[R_edge] (N26) to[bend left=\DownBendAngle] (N27);
\draw[L_edge] (N27) to[bend right=\DownBendAngle] (N22);
\draw[R_edge] (N27) to[bend left=\DownBendAngle] (N28);
\draw[L_edge] (N28) to[bend right=\DownBendAngle] (N29);
\draw[R_edge] (N28) to[bend left=\DownBendAngle] (N29);
\draw[R_edge] (N29) to[bend left=\DownBendAngle] (N22);
\draw[L_edge] (N30) to[bend right=\DownBendAngle] (N6);
\draw[R_edge] (N30) to[bend left=\DownBendAngle] (N27);
\end{tikzpicture}}
\end{minipage}%
\begin{minipage}{0.32\textwidth}
  \centering
  \resizebox{!}{5cm}{\input{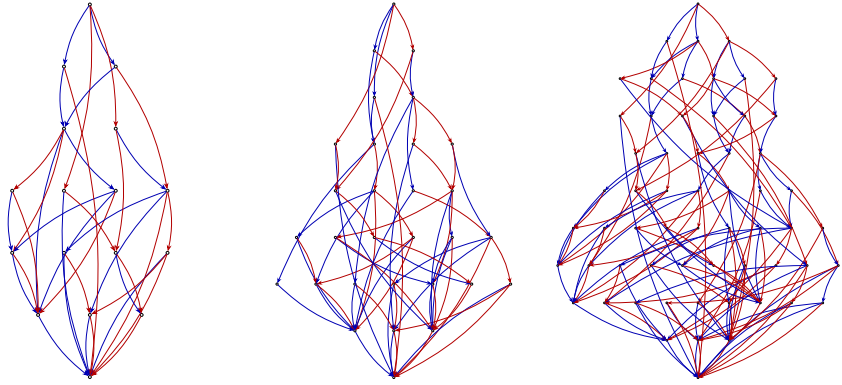}}
\end{minipage}%
\caption{The first three harrowing terms of $(H_n^1+H_n^2)$}
\label{fig:bach_sum}
\end{figure}

\subsection{Topological properties}

In this section we address some topological properties of $(\setC,wd)$.

We first note that $(\setC,wd)$ is \emph{unbounded}, i.e. that the distance between elements of \setC\ can be arbitrarily high. We demonstrate this with two families, one partizan and one impartial. First, consider the sequence
 
$$ (s_n) = ( \{\pm1+n,\pm2+n,\ldots ,\pm(n-1)+n | \pm1-n,\pm2-n,\ldots ,\pm(n-1)-n\} )_{n=2}^\infty $$

This is a family of sums of switches that includes 

$$\pm\{3|1\}, \pm \{\{4|2\},\{5|1\}\}, \pm \{\{5|3\},\{6|2\},\{7|1\}\}, \ldots$$ 

The diminishing weight distance between successive terms increases without bound. As an impartial example consider the family of nimbers $\{*n\}_{n=1}^\infty$. For any $n>m>N\in \mathbb{N}$ we can see that the distance $wd(*n,*m)$ is bounded below by the difference in the degrees of the source vertices, $2(n-m)$.

A metric space is \emph{separable} if it contains a countable dense subset. That \setC\ has this property follows directly from the fact that \setC\ is itself countable.

Since no two elements in \setC\ can have distance less than $\frac{1}{2^{d-1}}$, where $d$ is the larger distance from the source to the sink among the two games, it's clear that for each element of \setC\ there is an open ball containing \setC\ and nothing else. Hence \setC\ contains none of its limit points and is not \emph{complete} but is, in fact, a \emph{discrete space}. Since the collection of all such open sets, one for each element of \setC, comprises an open cover of \setC\ with no finite subcover, we see that $(\setC,wd)$ is not a \emph{compact space}. 



\section{Other distance metrics}\label{sec:othermetrics}

There are a number of alternative potential distance metrics on \setC, which we will devote this section to discussing. Firstly, let's consider some simpler metrics.

The simplest metric on any space is the \emph{discrete metric}, under which every pair of games has distance $1$ if they are distinct and distance $0$ if they are the same. There is not much else to say about this metric, so we  move on to some of potentially more interest.

\begin{Definition}
Let $b(x)$ denote the birthday of the game $x$, which can be visualized as the length of a longest path from $x$ to $0$ in $D(x)$. Let $G,H\in \setC$. The following functions from $\setC \times \setC \rightarrow \mathbb{R}$ are distance metrics. 
\begin{enumerate}
	\item $bd(G,H) = \min \{b(x) | x\in \Delta(F(G),H(G))\}$, where $\Delta$ represents symmetric difference and $F(X)$ is the set of all followers of the game $X$
	\item $bs(G,H) = \sum_{x\in \Delta(G,H)} \left(\frac{1}{2}\right)^{b(x)}$
	\item $ed(G,H)$ is the traditional edit distance on the graphs $D(G),D(H)$
\end{enumerate}
\end{Definition}

We omit proofs that the first two are metrics. Edit distance is a distance metric on the space of all finite graphs \cite{zhang1989simple} and can easily be extended to bicolored directed graphs by prohibiting color and direction changes. While these are relatively easy to compute, (or in the case of $ed$ already have existing algorithms leading to greater computational efficiency), none really meets the spirit of our goal to relate game distance to actual play. Diminishing weight distance, on the other hand, gives appropriate priority to early moves over later moves, and better compares board states. Also, of all of these only $bs$ has the potential for interesting converging sequences.

On the matter of board states, Carvalho and dos Santos show in \cite{carvalho2019nontrivial} that every game in \setC\ can be realized by a position in the universal game \textsc{K}$\bar{\textsc{o}}$\textsc{nane} given a large enough board. Thus a simple to describe (but very difficult to compute) metric on \setC\ could be designed around the difference between these board states in this specific ruleset. However, the associated boards in \textsc{K}$\bar{\textsc{o}}$\textsc{nane} become unmanageably large quite fast, and we also do not wish to bias our distance metric towards this ruleset.

Another alternative distance metric is based on an idea of Aaron Siegel's \cite{SiegelPersonal:2024} addressing similarity among a pair of short games. Given $G\in \setC$ let $G_{\triangleleft}^n$ (alternately $G_{\triangleright}^n$) be those games born by day $n$ that are less than (alternately greater than) or confused with $G$. Then let $G^n$ be the game $\{G_{\triangleleft}^n|G_{\triangleright}^n\}$. Siegel suggests the similarity between games $G$ and $H$ can be measured by the greatest $n$ such $G^n = H^n$. We define the \emph{resolution distance} between $G,H\in \setC$ to be $rd(G,H) = \left( \frac{1}{2}\right)^{d}$ where $d = \min_{i=0}^\infty \{i | G^i \neq H^i\}$ if $G\neq H, rd(G,G)=0$. Note that if two games $G,H$ are in different outcome classes then $G^0 \neq H^0$ so their resolution distance is $1$. 

Resolution distance is interesting and worth further study, however it yields only values that are powers of $\frac{1}{2}$ and, most importantly, is incredibly difficult to calculate. As of this writing we do not even know all games born by day $4$, which is somewhere between $10^{28.2}$ and $4\times 10^{184}$  \cite{suetsugu2022improving}. Analysis including investigation of topological properties and discovery of Cauchy sequences might prove interesting, but computing $rd(G,H)$ appears to be intractable.





\section{Conclusion and remaining questions}\label{sec:conclusion}

We have motivated the case for, and developed, a real-valued distance metric on the set of all short combinatorial games in canonical form, and have provided a number of loopy games in the closure of the resulting metric space. In doing so we have investigated sidling sequences, as well as sequences of finite games that do not sidle.

Loopy games are partitioned into stoppers and non-stoppers, enders and non-enders, and even stopper-sided and non-stopper-sided. We have now seen that the distance metric $wd$ distinguishes loopy games yet again, those in $\overline{\setC}$ and those outside of it. We have not seen a plum tree that is not in $\overline{\setC}$, nor a non-stopper-sided game in $\overline{\setC}$. However, we have seen some non-stoppers in $\overline{\setC}$ and others that are not. The following questions remain.

\begin{enumerate}
	\item Are all plum trees in $\overline{\setC}$?
	\item Are any non-stopper-sided games in $\overline{\setC}$?
	\item What distinguishes non-stoppers in $\overline{\setC}$ from those not in $\overline{\setC}$?
\end{enumerate}

We have also not addressed Cauchy sequences composed entirely of infinite DAGs. Could sequences of this form help us expand $\overline{\setC}$?


\bibliographystyle{plainurl}

\begin{thebibliography}{10}

\bibitem{LessonsInPlay:2007}
M.~H. Albert, R.~J. Nowakowski, and D.~Wolfe.
\newblock {\em Lessons in Play: An Introduction to Combinatorial Game Theory}.
\newblock A. K. Peters, Wellesley, Massachusetts, 2007.

\bibitem{WinningWays:2001}
Elwyn~R. Berlekamp, John~H. Conway, and Richard~K. Guy.
\newblock {\em Winning Ways for your Mathematical Plays}, volume~1.
\newblock A K Peters, Wellesley, Massachsetts, 2001.

\bibitem{carvalho2019nontrivial}
Alda Carvalho and Carlos~Pereira Dos~Santos.
\newblock A nontrivial surjective map onto the short conway group.
\newblock {\em Games of no chance V}, 70:271--284, 2019.

\bibitem{ConwayLoopy:1978}
John~H Conway.
\newblock Loopy games.
\newblock In {\em Annals of Discrete Mathematics}, volume~3, pages 55--74.
  Elsevier, 1978.

\bibitem{SiegelPersonal:2024}
Aaron Siegel.
\newblock personal communication, 2024.

\bibitem{SiegelCGSuite:2003}
Aaron~N Siegel.
\newblock Cgsuite or combinatorial game suite. software toolkit for analyzing
  combinatorial games, 2024.
\newblock URL: \url{https://www.cgsuite.org/}.

\bibitem{NewResultsMSRI:Siegel:2009}
Aaron~N. Siegel and Mathematical Sciences~Research Institute.
\newblock {\em New results in Loopy games}, page 215–232.
\newblock Mathematical Sciences Research Institute Publications. Cambridge
  University Press, 2009.

\bibitem{SiegelCGT:2013}
A.N. Siegel.
\newblock {\em Combinatorial Game Theory}.
\newblock Graduate Studies in Mathematics. American Mathematical Society, 2013.
\newblock URL: \url{https://books.google.com/books?id=VUVrAAAAQBAJ}.

\bibitem{suetsugu2022improving}
Koki Suetsugu.
\newblock Improving upper and lower bounds of the number of games born by day
  4.
\newblock {\em arXiv preprint arXiv:2208.13403}, 2022.

\bibitem{zhang1989simple}
Kaizhong Zhang and Dennis Shasha.
\newblock Simple fast algorithms for the editing distance between trees and
  related problems.
\newblock {\em SIAM journal on computing}, 18(6):1245--1262, 1989.

\end{thebibliography}

\end{document}